\DeclareSymbolFont{cyrletters}{OT2}{wncyr}{m}{n}
\DeclareMathOperator{\chaR}{char}
\DeclareMathOperator{\rank}{rank}
\DeclareMathOperator{\Tr}{Tr}
    \newcommand{\Aut}{{\mathrm{Aut}}}
    \newcommand{\Gal}{{\mathrm{Gal}}} \newcommand{\GL}{{\mathrm{GL}}}
    \DeclareMathOperator{\Hom}{Hom}
    \newcommand{\loc}{{\mathrm{loc}}}
    \DeclareMathOperator{\ord}{ord}
    \newcommand{\pr}{{\mathrm{pr}}}
    \newcommand{\Sel}{{\mathrm{Sel}}}
    \newcommand{\tr}{{\mathrm{tr}}}
    \newcommand{\wh}{\widehat}
    \newcommand{\ov}{\overline}
    \newcommand{\ra}{\rightarrow}
\newcommand{\xra}{\xrightarrow}
\begin{document}

    \theoremstyle{plain}
    \newtheorem{thm}{Theorem}[section]
    \newtheorem{cor}[thm]{Corollary}
    \newtheorem{lem}[thm]{Lemma}
    \newtheorem{hyp}[thm]{Hypothesis}
    \newtheorem{prop}[thm]{Proposition}
    \newtheorem{conj}[thm]{Conjecture}
    \newtheorem{fact}[thm]{Fact}
    \newtheorem{claim}[thm]{Claim}
    \theoremstyle{definition}
    \newtheorem{defn}[thm]{Definition}
    \newtheorem{example}[thm]{Example}
    \newtheorem{exercise}[thm]{Exercise}
    \theoremstyle{remark}
    \newtheorem{remark}[thm]{Remark}
    \newtheorem{Q}[thm]{Question}
    \renewcommand{\theequation}{\arabic{equation}}
    \numberwithin{equation}{section}

 \newcommand{\Neron}{N\'{e}ron~}\newcommand{\adele}{ad\'{e}le~}
    \newcommand{\Adele}{Ad\'{e}le~}\newcommand{\adeles}{ad\'{e}les~}
    \newcommand{\adelic}{ad\'{e}lic}
    \newcommand{\idele}{id\'{e}le~}\newcommand{\Idele}{Id\'{e}le~}
    \newcommand{\ideles}{id\'{e}les~}\newcommand{\etale}{\'{e}tale~}
    \newcommand{\Etale}{\'{E}tale~}
    \newcommand{\Poincare}{Poincar\'{e}~}\newcommand{\et}{{\text{\rm\'et}}}

    \newcommand{\BA}{{\mathbb {A}}} \newcommand{\BB}{{\mathbb {B}}}
    \newcommand{\BC}{{\mathbb {C}}} \newcommand{\BD}{{\mathbb {D}}}
    \newcommand{\BE}{{\mathbb {E}}} \newcommand{\BF}{{\mathbb {F}}}
    \newcommand{\BG}{{\mathbb {G}}} \newcommand{\BH}{{\mathbb {H}}}
    \newcommand{\BI}{{\mathbb {I}}} \newcommand{\BJ}{{\mathbb {J}}}
    \newcommand{\BK}{{\mathbb {K}}} \newcommand{\BL}{{\mathbb {L}}}
    \newcommand{\BM}{{\mathbb {M}}} \newcommand{\BN}{{\mathbb {N}}}
    \newcommand{\BO}{{\mathbb {O}}} \newcommand{\BP}{{\mathbb {P}}}
    \newcommand{\BQ}{{\mathbb {Q}}} \newcommand{\BR}{{\mathbb {R}}}
    \newcommand{\BS}{{\mathbb {S}}} \newcommand{\BT}{{\mathbb {T}}}
    \newcommand{\BU}{{\mathbb {U}}} \newcommand{\BV}{{\mathbb {V}}}
    \newcommand{\BW}{{\mathbb {W}}} \newcommand{\BX}{{\mathbb {X}}}
    \newcommand{\BY}{{\mathbb {Y}}} \newcommand{\BZ}{{\mathbb {Z}}}

    \newcommand{\CA}{{\mathcal {A}}} \newcommand{\CB}{{\mathcal {B}}}
    \newcommand{\CC}{{\mathcal {C}}} \renewcommand{\CD}{{\mathcal {D}}}
    \newcommand{\CE}{{\mathcal {E}}} \newcommand{\CF}{{\mathcal {F}}}
    \newcommand{\CG}{{\mathcal {G}}} \newcommand{\CH}{{\mathcal {H}}}
    \newcommand{\CI}{{\mathcal {I}}} \newcommand{\CJ}{{\mathcal {J}}}
    \newcommand{\CK}{{\mathcal {K}}} \newcommand{\CL}{{\mathcal {L}}}
    \newcommand{\CM}{{\mathcal {M}}} \newcommand{\CN}{{\mathcal {N}}}
    \newcommand{\CO}{{\mathcal {O}}} \newcommand{\CP}{{\mathcal {P}}}
    \newcommand{\CQ}{{\mathcal {Q}}} \newcommand{\CR}{{\mathcal {R}}}
    \newcommand{\CS}{{\mathcal {S}}} \newcommand{\CT}{{\mathcal {T}}}
    \newcommand{\CU}{{\mathcal {U}}} \newcommand{\CV}{{\mathcal {V}}}
    \newcommand{\CW}{{\mathcal {W}}} \newcommand{\CX}{{\mathcal {X}}}
    \newcommand{\CY}{{\mathcal {Y}}} \newcommand{\CZ}{{\mathcal {Z}}}

    \newcommand{\RA}{{\mathrm {A}}} \newcommand{\RB}{{\mathrm {B}}}
    \newcommand{\RC}{{\mathrm {C}}} \newcommand{\RD}{{\mathrm {D}}}
    \newcommand{\RE}{{\mathrm {E}}} \newcommand{\RF}{{\mathrm {F}}}
    \newcommand{\RG}{{\mathrm {G}}} \newcommand{\RH}{{\mathrm {H}}}
    \newcommand{\RI}{{\mathrm {I}}} \newcommand{\RJ}{{\mathrm {J}}}
    \newcommand{\RK}{{\mathrm {K}}} \newcommand{\RL}{{\mathrm {L}}}
    \newcommand{\RM}{{\mathrm {M}}} \newcommand{\RN}{{\mathrm {N}}}
    \newcommand{\RO}{{\mathrm {O}}} \newcommand{\RP}{{\mathrm {P}}}
    \newcommand{\RQ}{{\mathrm {Q}}} \newcommand{\RR}{{\mathrm {R}}}
    \newcommand{\RS}{{\mathrm {S}}} \newcommand{\RT}{{\mathrm {T}}}
    \newcommand{\RU}{{\mathrm {U}}} \newcommand{\RV}{{\mathrm {V}}}
    \newcommand{\RW}{{\mathrm {W}}} \newcommand{\RX}{{\mathrm {X}}}
    \newcommand{\RY}{{\mathrm {Y}}} \newcommand{\RZ}{{\mathrm {Z}}}

    \newcommand{\fa}{{\mathfrak{a}}} \newcommand{\fb}{{\mathfrak{b}}}
    \newcommand{\fc}{{\mathfrak{c}}} \newcommand{\fd}{{\mathfrak{d}}}
    \newcommand{\fe}{{\mathfrak{e}}} \newcommand{\ff}{{\mathfrak{f}}}
    \newcommand{\fg}{{\mathfrak{g}}} \newcommand{\fh}{{\mathfrak{h}}}
    \newcommand{\fii}{{\mathfrak{i}}} \newcommand{\fj}{{\mathfrak{j}}}
    \newcommand{\fk}{{\mathfrak{k}}} \newcommand{\fl}{{\mathfrak{l}}}
    \newcommand{\fm}{{\mathfrak{m}}} \newcommand{\fn}{{\mathfrak{n}}}
    \newcommand{\fo}{{\mathfrak{o}}} \newcommand{\fp}{{\mathfrak{p}}}
    \newcommand{\fq}{{\mathfrak{q}}} \newcommand{\fr}{{\mathfrak{r}}}
    \newcommand{\fs}{{\mathfrak{s}}} \newcommand{\ft}{{\mathfrak{t}}}
    \newcommand{\fu}{{\mathfrak{u}}} \newcommand{\fv}{{\mathfrak{v}}}
    \newcommand{\fw}{{\mathfrak{w}}} \newcommand{\fx}{{\mathfrak{x}}}
    \newcommand{\fy}{{\mathfrak{y}}} \newcommand{\fz}{{\mathfrak{z}}}
     \newcommand{\fA}{{\mathfrak{A}}} \newcommand{\fB}{{\mathfrak{B}}}
    \newcommand{\fC}{{\mathfrak{C}}} \newcommand{\fD}{{\mathfrak{D}}}
    \newcommand{\fE}{{\mathfrak{E}}} \newcommand{\fF}{{\mathfrak{F}}}
    \newcommand{\fG}{{\mathfrak{G}}} \newcommand{\fH}{{\mathfrak{H}}}
    \newcommand{\fI}{{\mathfrak{I}}} \newcommand{\fJ}{{\mathfrak{J}}}
    \newcommand{\fK}{{\mathfrak{K}}} \newcommand{\fL}{{\mathfrak{L}}}
    \newcommand{\fM}{{\mathfrak{M}}} \newcommand{\fN}{{\mathfrak{N}}}
    \newcommand{\fO}{{\mathfrak{O}}} \newcommand{\fP}{{\mathfrak{P}}}
    \newcommand{\fQ}{{\mathfrak{Q}}} \newcommand{\fR}{{\mathfrak{R}}}
    \newcommand{\fS}{{\mathfrak{S}}} \newcommand{\fT}{{\mathfrak{T}}}
    \newcommand{\fU}{{\mathfrak{U}}} \newcommand{\fV}{{\mathfrak{V}}}
    \newcommand{\fW}{{\mathfrak{W}}} \newcommand{\fX}{{\mathfrak{X}}}
    \newcommand{\fY}{{\mathfrak{Y}}} \newcommand{\fZ}{{\mathfrak{Z}}}

\title{Rubin's conjecture on local units in the anticyclotomic tower at inert primes: $p=3$ case}

\author{Xiaojun Yan}
\address{Beijing Institute of Mathematical Sciences and Applications, Beijing 101408, China.}\address{Department of Mathematics and Yau Mathematical Sciences Center, Tsinghua University;}
\email{xjyan95@amss.ac.cn}

\author{Xiuwu Zhu}
\address{Beijing Institute of Mathematical Sciences and Applications, Beijing 101408, China.}\address{Department of Mathematics and Yau Mathematical Sciences Center, Tsinghua University;}
\email{xwzhu@bimsa.cn}

\date{\today}
\maketitle

\begin{abstract}
We prove Rubin's conjecture on the structure of local units in the anticyclotomic $\BZ_p$-extension of unramified quadratic extension of $\BQ_p$ in $p=3$ case by extending Burungale-Kobayashi-Ota's work.
\end{abstract}

\tableofcontents
\section{Introduction}

\subsection{Background}
Iwasawa theory is a basic tool to study the growth of the Mordell-Weil rank of elliptic curves in a tower of number fields and its relation to special $L$-value.
For an elliptic curve $E$ over $\BQ$ with complex multiplication by an imaginary quadratic field $K$, it is classical to study the module of local units modulo elliptic units attached to $E$ in the $\BZ_p^2$-extension of $K$.
If $p$ splits in $K$, then this module is torsion, and its characteristic ideal is generated by the two-variable Katz $p$-adic $L$-function attached to $E$ (cf. \cite{Ya}).
However, if $p$ inerts in $K$, this module is non-torsion, since the rank of the module of local units is twice that of the module of elliptic units.

Let $\Lambda$ be the Iwasawa algebra for the anticyclotomic $\BZ_p$-extension of an unramified quadratic extension of $\BQ_p$.
Rubin considered the $\Lambda$-module $V$, the anticyclotomic projection of local units of $\BZ_p^2$-extension of $K$, and defined two rank $1$ free submodules $V^{\pm}$.
He conjectured (cf. \cite{Ru}) that
$$V=V^+\oplus V^-.$$
The projection of every elliptic unit lies in $V^\epsilon$, where $\epsilon$ is the sign of $L(E/\BQ,s)$.
Under the conjecture, Rubin constructed a $p$-adic $L$-function, which generates the quotient of $V^\epsilon$ by the image of elliptic units.
Moreover, Agboola-Howard \cite{AH} formulated and proved an Iwasawa main conjecture that involves Rubin's $p$-adic $L$-function under Rubin's conjecture.

Rubin proposed a criterion under which the conjecture is true in the case $p\geq 5$.
His criterion involves the existence of following global objects:
\begin{enumerate}
    \item[(R1)] a CM elliptic curve with good supersingular reduction at $p$ whose central $L$-value is $p$-indivisible,
    \item[(R2)] a Heegner point over imaginary quadratic fields with $p$ inert which is locally $p$-indivisible.
\end{enumerate}
He proved that there are primes $p$ with density $1$ at which (R1) exists.
In \cite{BKO}, using the results of \cite{Fi}, Burungale-Kobayashi-Ota verified the existence of a modified (R1) for primes $p>3$.
For (R2), Rubin verified that it exists for $5\leq p\leq 1000$ and $p\not\equiv 1\pmod{12}$ by using the computation of Stephen (unpublished, but similar to \cite{BS}).
However, in general, it is difficult to verify the local $p$-indivisibility of Heegner points.
Burungale-Kobayashi-Ota consider formal CM points and the modular parametrization of elliptic curves instead of Heegner points.
They constructed such formal CM points when $p>3$, and proved Rubin's conjecture in the case $p>3$.

In this paper, we prove Rubin's conjecture for the case $p=3$ by constructing special formal CM points in this case following Burungale-Kobayashi-Ota's approach.
As an application, we complete the proof of Agboola-Howard's main conjecture when $p=3$.
The result has various potential applications such as extending the $p$-adic Waldspurger formula presented in \cite{burungale2024p} to the prime $p=3$,  \cite{BURUNGALE2024} on Kato's epsilon-conjecture and \cite{burungale2024hecke} on vanishing of $\mu$-invariants on Rubin's $p$-adic $L$-function.

In the case $p=3$, we remark that Rubin's criterion also works and it may be verified by some computational methods.
\subsection{Statement} Let $p$ be a prime.
Let $\Phi$ be the unramified quadratic extension of $\BQ_p$ and $\CO$ be its ring of integers.
Let $\CF_{/\CO}$ be a Lubin-Tate formal group with parameter $\pi:=-p$.
Let $\Phi_n=\Phi(\CF[\pi^{n+1}])$ for $0\leq n\leq \infty$.
Then we have an isomorphism $\kappa: \Gal(\Phi_\infty/\Phi)\xrightarrow{\sim}\CO^\times$, $\sigma\mapsto \kappa(\sigma)$ where $\sigma(v)=[\kappa(\sigma)^{-1}](v)$ for all $v\in \CF[\pi^{\infty}]$.
Let $\Delta$ be the torsion subgroup of $\Gal(\Phi_\infty/\Phi)$.
Let $\Theta_n=\Phi_n^{\Delta}$ for all $n\leq \infty$.

\subsubsection{Coleman power series and Coates-Wiles homomorphism}
For a finite extension $F$ of $\BQ_p$, we denote $U(F)$ its group of local principal units.
Define
$$U_\infty=\left(\varprojlim(U(\Phi_n)\otimes_{\BZ_p} \CO)\right)^{\kappa|_{\Delta}},\quad U_\infty^*=U_\infty\otimes_{\CO}T_{\pi}\CF^{\otimes -1}=\Hom_{\CO}(T_{\pi}\CF,U_\infty),$$
where $T_{\pi}\CF=\varprojlim \CF[\pi^{n+1}]$ is the $\pi$-adic Tate module of $\CF$.
Wintenberger showed that $U_\infty^*$ is a rank $2$ free $\CO[[\Gal(\Phi_\infty/\Phi_0)]]$-module (cf. \cite{Wi}).

Consider the Coates-Wiles logarithmic derivatives
$$\delta:U_\infty^{*}\ra \CO,\quad x=u\otimes a\otimes v^{\otimes -1}\mapsto a\cdot\frac{f'(0)}{f(0)},$$
and
$$\delta_n:U_\infty^*\ra \Phi_n,\quad x=u\otimes a\otimes v^{\otimes -1}\mapsto \frac{a}{\lambda'(v_n)}\cdot\frac{f'(v_n)}{f(v_n)},$$
where $u=(u_n)_n\in \varprojlim U(\Phi_n)$, $a\in \CO$, $v=(v_n)_n\in T_{\pi}\CF$ is a generator as $\CO$-module, $f\in \CO[[X]]^\times$ is the Coleman power series such that $f(v_n)=u_n$ and $\lambda$ is the formal logarithm of $\CF$ normalized by $\lambda'(0)=1$.

For a finite character $\chi:\Gal(\Phi_\infty/\Phi)\ra \ov{\BQ}_p^\times$ which factor through $\Gal(\Phi_n/\Phi)$, we define
$$\delta_\chi:U_\infty^*\ra \ov{\BQ}_p,\quad x\mapsto \frac{1}{\pi^{n+1}}\sum_{\gamma\in\Gal(\Phi_n/\Phi)}\chi(\gamma)\delta_n(x)^\gamma.$$
It is independent of the choice of $n$. For $\sigma\in \Gal(\Phi_\infty/\Phi)$, we have $\delta_\chi(x^\sigma)=\chi(\sigma)^{-1}\delta_\chi(x)$.

\subsubsection{Anticyclotomic projection}
Let $\Psi_\infty$ be the anticyclotomic $\BZ_p$-extension of $\Phi$ and $G^-=\Gal(\Psi_\infty/\Phi)$ be its Galois group.
Let $G^+=\Gal(\Theta_\infty/\Psi_\infty)$.
Let $\Psi_n$ be the subextension of $\Psi_\infty/\Phi$ of degree $p^n$.
If $\chi$ is an anticyclotomic character, i.e., $\chi$ is a homomorphism $\Gal(\Psi_n/\Phi)\ra \ov{\BQ}_p^\times$ for some $n$, then $\delta_\chi((\sigma-1)U_\infty^*)$ vanishes for all $\sigma\in \Gal(\Phi_\infty/\Psi_\infty)$.
Set $V_\infty^*:=U_\infty^*/\{(\sigma-1)u|\sigma\in\Gal(\Phi_\infty/\Psi_\infty),u\in U_\infty^*\}.$
Then $\delta_\chi$ factors through $V_\infty^*$.

\subsubsection{Decomposition of Local Principal Units}
We say a non-trivial anticyclotomic character $\chi$ has conductor $p^n$ if $\chi$ factors through $\Gal(\Phi_{n-1}/\Phi)$ but not through $\Gal(\Phi_{n-2}/\Phi)$, equivalently, $\chi$ factors through $\Gal(\Psi_{n-1}/\Phi)$ but not through $\Gal(\Psi_{n-2}/\Phi)$. We say that trivial character has conductor $1$.

Let $\Xi^+$ (resp. $\Xi^-$) be the set of anticyclotomic characters whose conductors are even (resp. odd) powers of $p$.
Define
$$V_\infty^{*,\pm}:=\left\{v\in V_\infty^*|\delta_\chi(v)=0\text{ for every }\chi\in \Xi^{\mp}\right\}.$$
Set $\Lambda=\CO[[G^-]]$.
It is known that $V_\infty^*$ is a free $\Lambda$-module of rank $2$.
We will show the following theorem.
\begin{thm}\label{thm:main}
Assume $p\geq 3$. We have
$$V_\infty^*\simeq V_\infty^{*,+}\oplus V_\infty^{*,-}.$$
\end{thm}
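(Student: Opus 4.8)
The plan is to follow Rubin's method \cite{Ru} in the form developed by Burungale--Kobayashi--Ota \cite{BKO}: once the right supply of formal CM points is in hand, the deduction of Theorem \ref{thm:main} is formal and uniform in $p\ge 3$. Since $V_\infty^*$ is free of rank $2$ over the regular local ring $\Lambda=\CO[[G^-]]$, with maximal ideal $\fm$, the decomposition will follow from producing two elements
$$z^+\in V_\infty^{*,+},\qquad z^-\in V_\infty^{*,-}$$
with $\delta(z^+)\in\CO^\times$ and $\delta_{\chi_0}(z^-)$ a $p$-adic unit for some $\chi_0\in\Xi^-$; here $\delta=\delta_{\mathbf 1}$ is the integral Coates--Wiles homomorphism of the trivial character, which lies in $\Xi^+$. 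Suppose such $z^\pm$ are given. From $z^-\in V_\infty^{*,-}$ and $\mathbf 1\in\Xi^+$ we get $\delta(z^-)=0$, and from $z^+\in V_\infty^{*,+}$ and $\chi_0\in\Xi^-$ we get $\delta_{\chi_0}(z^+)=0$. Moreover $\delta$ carries $\fm V_\infty^*$ into $p\CO$ (being Galois-invariant and $\CO$-linear), and $\delta_{\chi_0}$ carries $\fm V_\infty^*$ into the maximal ideal of its coefficient ring; hence, reducing modulo $\fm$, the pair $(\delta,\delta_{\chi_0})$ sends $\overline{z^+}$ to $(\text{unit},0)$ and $\overline{z^-}$ to $(0,\text{unit})$, so $\overline{z^+}$ and $\overline{z^-}$ are linearly independent in $V_\infty^*/\fm V_\infty^*$. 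By Nakayama's lemma $\{z^+,z^-\}$ is therefore a $\Lambda$-basis of $V_\infty^*$, so $V_\infty^*=\Lambda z^{+}+\Lambda z^{-}\subseteq V_\infty^{*,+}+V_\infty^{*,-}$, forcing $V_\infty^{*,+}+V_\infty^{*,-}=V_\infty^*$. Combined with $V_\infty^{*,+}\cap V_\infty^{*,-}=0$ — which I would deduce from the absence of universal norms, equivalently the nonvanishing of the two-variable $p$-adic $L$-function along the anticyclotomic line — this yields $V_\infty^*=V_\infty^{*,+}\oplus V_\infty^{*,-}$.

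It remains to construct $z^+$ and $z^-$, and here the arithmetic enters. Following \cite{BKO}, one takes two norm-compatible systems of formal CM points of opposite parity on the formal group $\CF$ — which is isomorphic to the formal group of a CM elliptic curve with good supersingular reduction at $p$ — forms their associated Coleman power series, applies the Coates--Wiles homomorphisms, and projects to $V_\infty^*$; for parity reasons the resulting elements lie in $V_\infty^{*,+}$ and $V_\infty^{*,-}$ respectively, and $\delta(z^+)$, $\delta_{\chi_0}(z^-)$ are, up to explicit nonzero factors, the ``local values'' of these points. The two unit conditions above are thus exactly the \emph{local $p$-indivisibility} of the formal CM points, which, via the modular parametrization together with a Waldspurger/Gross--Zagier-type identity, is reduced to the $p$-indivisibility of the central critical $L$-values of suitable CM elliptic curves with good supersingular reduction at $p$. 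Such curves, for both signs of the functional equation, are supplied by the results of Finis \cite{Fi} in the mildly generalized form recorded in \cite{BKO}; this realizes Rubin's first sufficient condition, his second condition on Heegner points being replaced by the formal CM points.

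The one step that does not survive at $p=3$ is the construction of these formal CM points: the argument of \cite{BKO} rests on strict valuation and ramification inequalities for the relevant $\pi$-division values, and these degenerate — or fail outright — when $p=3$. Constructing formal CM points with the required properties at $p=3$, and checking that they are defined over the correct integral data and are locally $p$-indivisible, is the real content of this paper and, I expect, the principal obstacle; it occupies the subsequent sections. Once it is available, the two preceding paragraphs apply verbatim and give Theorem \ref{thm:main} for all $p\ge 3$.
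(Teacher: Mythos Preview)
Your high-level plan --- find $z^+\in V_\infty^{*,+}$ and $z^-\in V_\infty^{*,-}$ whose images in $V_\infty^*/\fm V_\infty^*$ are independent, then apply Nakayama --- is sound in the abstract, but your description of how the two elements arise conflates two genuinely different constructions, and in doing so leaves a real gap on the $V_\infty^{*,-}$ side.

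The element on the $+$ side does \emph{not} come from formal CM points. It is an \emph{elliptic unit}: a global norm-compatible system in $\varprojlim U(\Phi_n)$ whose Coleman power series, via the Coates--Wiles map, computes $\delta_\chi(\xi_\nu)=\Omega^{-1}L_{\ff\ell p}(\ov\varphi\nu\chi,1)$ (Theorem \ref{thm:Lvalue}). The root number forces $\xi_\nu\in V_\infty^{*,+}$, and Finis' mod $p$ nonvanishing for an auxiliary $\ell$-power character $\nu$ gives $\delta(\xi_\nu)\in\CR^\times$. No formal CM point, modular parametrization, or Waldspurger formula is involved here.

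Formal CM points, by contrast, live in $\CF(\Psi_n)$, the \emph{Kummer-dual} side of the pairing $\CF(\Psi_\infty)\otimes\Phi/\CO\times V_\infty^*\to\Phi/\CO$; they are not local units and have no Coleman power series. One cannot ``project them to $V_\infty^*$''. What the paper (following \cite{BKO}) does is build a norm-compatible tower $y_s\in A^-\subset\CF(\Psi_s)$ with $y\in\CF(\Phi)\setminus p\CF(\Phi)$ at the bottom; the $p$-indivisibility of $y$ is obtained \emph{locally}, by choosing an unramified supersingular point on $X_0(N)_{\BF_{p^2}}$ and lifting an indivisible point of $\wh E(\fm)$ through the formal completion of the modular parametrization --- there is no $L$-value here at all. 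This system forces $(A^-\otimes\Phi/\CO)^{G^-}=\CF(\Phi)\otimes\Phi/\CO$ via a Herbrand-quotient count, whence $V_\infty^*/V_\infty^{*,-}$ is $\Lambda$-free of rank $1$ by duality and Nakayama (Corollary \ref{cor3}). Combined with the elliptic-unit element $\xi\in V_\infty^{*,+}$ mapping to a generator of this quotient, and with $V_\infty^{*,+}\cap V_\infty^{*,-}=0$ (proved via the Kummer pairing and Lemma \ref{lem:lambda}(6), not via ``absence of universal norms''), one concludes $V_\infty^*=V_\infty^{*,+}\oplus V_\infty^{*,-}$.

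So the architecture is intrinsically \emph{asymmetric}: elliptic units plus Finis on the $+$ side, formal CM points on the dual side controlling $V_\infty^*/V_\infty^{*,-}$. Your symmetric plan would instead require an element $z^-\in V_\infty^{*,-}$ with $\delta_{\chi_0}(z^-)$ a unit for some $\chi_0\in\Xi^-$ of $p$-power conductor; via elliptic units this amounts to the $p$-indivisibility of $\Omega^{-1}L(\ov\varphi\chi_0,1)$ for such a $\chi_0$, which Finis' result (for $\ell$-power twists) does not supply, and which is precisely the kind of statement Rubin could not verify. The formal CM points were introduced exactly to \emph{circumvent} this missing $z^-$, not to produce it.
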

\begin{remark}
Rubin conjectured and verified the direct decomposition for $5\leq p\leq 1000$ and $p \not\equiv 1\pmod{12}$(cf. \cite{Ru}).
Burungale, Kobayashi and Ota proved it for primes $p\geq 5$(cf. \cite{BKO}).
We modify Burungale-Kobayashi-Ota's proof to include the case $p=3$.
\end{remark}

\subsection{Strategy}
We know that $V_\infty^*\simeq \Lambda^2$ by \cite{Wi}.
Consider the anticyclotomic projections of elliptic units in $V_\infty^{*}$. Their images under $\delta_\chi$ are algebraic parts of $L$-values of Hecke character $\chi\varphi$ (Theorem \ref{thm:Lvalue},(1)).
They vanish if the root number of $\chi\varphi$ is $-1$, which is the case if the root number $W(\varphi)$ of $\varphi$ is $1$ and the conductor of $\chi$ is an odd power of $p$, or $W(\varphi)=-1$ and the conductor of $\chi$ is an even power of $p$.
Hence  the root number of $\varphi$ determines which of $V_\infty^{*,\pm}$ the elliptic units belong to (Theorem \ref{thm:Lvalue},(2)).
Moreover, Rohrlich showed that there are all but finitely many anticyclotomic characters $\chi$ such that $L(\varphi\chi,1)\neq 0$.
This ensures the elliptic units above are nontrivial in $V_\infty^\pm$, so
$$\rank_\Lambda V_\infty^{*,\pm}\geq 1$$ (Theorem \ref{thm:rankgeq1}).

On the other hand, we have a perfect pairing
$$\langle\ ,\ \rangle:\CF(\Psi_\infty)\otimes_\CO\Phi/\CO\times V_\infty^*\ra \Phi/\CO.$$
The annihilator of $V_\infty^{*,\pm}$ under this paring  is $A^{\pm}\otimes\Phi/\CO$, where
$$A^{\pm}:=\{y\in \CF(\Psi_\infty)|\lambda_\chi(y)=0\text{ for all }\chi\in \Xi^{\pm}\}.$$
These modules $A^\pm$ can be well studied (Proposition \ref{prop:Kum} and Lemma \ref{lem:lambda}).
We may found that $A^{\pm}\otimes\Phi/\CO$ generate the whole $\CF(\Psi_\infty)\otimes\Phi/\CO$. Hence
$$V_\infty^{*,+}\cap V_\infty^{*,-}=0 \text{ and }\rank_\Lambda V_\infty^{*,\pm}=1.$$

Now its suffices to show that $V_\infty^*/V_\infty^{*,-}$ is isomorphic to $V_\infty^{*,+}$.
This can be done if $V_\infty^*/V_\infty^{*,-}$ is free of rank $1$ and there is $\xi\in V_\infty^{*,+}\otimes \CR$ for some coefficient ring $\CR$ such that $\delta_{\chi}(\xi)\in \CO^\times\otimes\CR$.

Burungale-Kobayashi-Ota considered the elliptic units of root number $+1$ twisted by an anticyclotomic character  $\nu$ along $\BZ_\ell$-extension for an auxiliary $\ell$.
Their images under $\delta_\chi$ are algebraic parts of $L(1,\varphi\chi\nu)$.
By the work of Finis \cite{Fi}, this $\nu$ can be well-chosen for the purpose that the algebraic parts of $L$-values do not vanish mod $p$.
Hence there is $\xi_\nu\in V_\infty^{*,+}\otimes\CR$ for some coefficient ring $\CR$ such that
$$\delta_\chi(\xi_\nu)\in\CO^\times\otimes\CR$$
(Theorem \ref{thm:V+}).

The last step (see Theorem \ref{thm:V-}) is to show
\begin{equation}\label{-free}
(A^-\otimes\Phi/\CO)^{G^-}=\CF(\Phi)\otimes\Phi/\CO.
\end{equation}
By the Nakayama Lemma, the $\Lambda$-module $V_\infty^*/V_\infty^{*,-}$ is free of rank $1$, which completes the proof.
To show Theorem \ref{thm:V-}, it suffices to prove
$$|\wh{H}^0(G_n^-,A_n^-)|=|\CF(\Phi)/\RN_{n/0}(A_n^-)|\leq p^{n-1},$$
where $A_n^-=A^-\cap \CF(\Psi_n)$.
The key point is to construct points in $A_n^-$ whose norm in $A_0^-$ is locally $p$-indivisible (Theorem \ref{thm:localpts}).
Actually, we will construct points satisfying
\begin{enumerate}
  \item $y\in \CF(\Phi)\backslash p\CF(\Phi)$,
  \item $y_s\in \CF(\Psi_s)$ such that $\tr_{s+1/s}y_{s+1}=-y_{s-1}$ for $s\geq 1$ and $\tr_{1/0}y_1=-y$.
  \item $y_s\in A^{-}$ if $s$ is odd.
\end{enumerate}

Choose a supersingular CM elliptic curve $E$ which has good supersingular reduction at $p$.
Then $\wh{E}\simeq \CF$ over $\CO$, where $\wh{E}$ is the formal group associated to $E$.
Rubin considered the Heegner points in $A_n^-$, which are the images of some CM points on $X_0(N)(\CO)$ under the modular parametrization map
$$\pi: X_0(N)\ra E_{/\CO}.$$
 If the bottom layer is $p$-indivisible, then we are done.
Unfortunately, we do not know the $p$-divisibility of it.

The idea of Burungale-Kobayashi-Ota is to construct formal CM points instead.
There are supersingular points on $X_0(N)(\CO)$ which may not be CM but fake CM, i.e., the formal group of the "representative" elliptic curve has an $\CO$-action. We call such points formal CM points.
Similar to the construction of Heegner points, Gross constructed a system of compatible formal CM points on $\wh{E}(\Psi_n)$.

Now we need to find a "good" supersingular point on $X_0(N)(\CO)$ which leads to the $p$-indivisibility in the bottom layer.
We may choose a point on $X_0(N)(\BF_{p^2})$ such that
\begin{enumerate}
  \item the point "represents" a supersingular elliptic curve with a level structure
  \item under modular parametrization $\ov{\pi}: X_0(N)\ra E$ the point is unramified and maps to $\ov{O}$
\end{enumerate}
Taking formal completion of $\pi$ over $\CO$ along these two points, we get an isomorphism $\wh{X_0(N)}\simeq \wh{E}$.
Choose $Q\in \wh{E}(\fm)\backslash p\wh{E}(\fm)$, where $\fm$ is the maximal ideal of $\CO$. Let $P\in X_0(N)(\CO)$ be the preimage of $Q$.
Then the point $P$ "represents" a fake CM elliptic curve $A$ with a level structure. This elliptic curve $A$ is what we want.
Noting that $X_0(N)$ is not fine moduli, we need replace $X_0(N)$ by $X(\Gamma_0(N)\cap \Gamma_1(M))$ and modify the above argument.

\subsection*{Acknowledgement}
We would like to thank Professor Ashay A. Burungale and Professor Ye Tian for their insightful discussions.
We also appreciate the valuable suggestions provided by the anonymous referee.

\section{Hecke \texorpdfstring{$L$}{L}-values and elliptic units}\label{sec:Lval}

In this section, we recall the proof of the following two theorems given in \cite{Ru} and \cite{BKO}.

\begin{thm}\label{thm:rankgeq1}
$\rank_\Lambda V_\infty^{*,\pm}\geq 1$.
\end{thm}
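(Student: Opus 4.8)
The plan is to exhibit, for each of the two signs, a single nonzero element of $V_\infty^{*,\pm}$ and then conclude formally, following \cite{Ru} and \cite{BKO}. First note that $V_\infty^{*,\pm}$ is a $\Lambda$-submodule of $V_\infty^*$: for $\sigma\in G^-$ one has $\delta_\chi(v^\sigma)=\chi(\sigma)^{-1}\delta_\chi(v)$, so $G^-$ preserves each condition $\delta_\chi(\cdot)=0$. Recall that $V_\infty^*$ is free, hence torsion-free, over the domain $\Lambda$; therefore any nonzero $z\in V_\infty^{*,\pm}$ has $\Ann_\Lambda(z)=0$, so $\Lambda z\cong\Lambda$ and $\rank_\Lambda V_\infty^{*,\pm}\geq\rank_\Lambda\Lambda z=1$. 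Everything thus reduces to producing such a $z$.

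These elements come from elliptic units. Fix a CM elliptic curve $E_{/\BQ}$ with good supersingular reduction at $p$ and CM by an imaginary quadratic field in which $p$ is inert, and let $\varphi$ be its Hecke character; replacing $E$ by a suitable quadratic twist (with conductor prime to $p$) we may take the global root number $W(\varphi)$ to be $+1$ or $-1$ at will, so it suffices to treat a fixed $\varphi$, produce a nonzero element of $V_\infty^{*,\epsilon}$ with $\epsilon=W(\varphi)$, and then run the argument for both twists. The anticyclotomic projection of the norm-coherent system of elliptic units attached to $\varphi$ gives an element $z_\varphi\in V_\infty^*$, and by Theorem \ref{thm:Lvalue}(1), for every finite-order anticyclotomic character $\chi$ the value $\delta_\chi(z_\varphi)$ is a nonzero explicit multiple (period times $p$-adic factor) of the algebraic $L$-value $L^{\alg}(\varphi\chi,1)$; in particular $\delta_\chi(z_\varphi)=0$ if and only if $L(\varphi\chi,1)=0$.

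Next I would carry out the root-number bookkeeping. The sign in the functional equation of $\varphi\chi$ (center $s=1$) depends on $W(\varphi)$ and, through the local root number at $p$, on the parity of the conductor exponent of the anticyclotomic character $\chi$; the conclusion (this is the content of Theorem \ref{thm:Lvalue}(2)) is that $W(\varphi\chi)=-1$, whence $L(\varphi\chi,1)=0$ and $\delta_\chi(z_\varphi)=0$, exactly for $\chi\in\Xi^{-\epsilon}$, i.e. for $\chi$ of odd conductor when $\epsilon=+1$ and for $\chi$ of even conductor (including the trivial character, by the functional equation) when $\epsilon=-1$. This is precisely the family of vanishings defining $V_\infty^{*,\epsilon}$, so $z_\varphi\in V_\infty^{*,\epsilon}$.

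It remains to check $z_\varphi\neq 0$, and here I would invoke Rohrlich's nonvanishing theorem: for the fixed $\varphi$ one has $L(\varphi\chi,1)\neq 0$ for all but finitely many anticyclotomic $\chi$ of $p$-power conductor, in particular for some $\chi\in\Xi^{\epsilon}$. Then $\delta_\chi(z_\varphi)\neq 0$ by Theorem \ref{thm:Lvalue}(1), so $z_\varphi\neq 0$ and the first paragraph finishes the case $\epsilon=W(\varphi)$; applying this to the two twists gives $\rank_\Lambda V_\infty^{*,+}\geq 1$ and $\rank_\Lambda V_\infty^{*,-}\geq 1$. The only substantive inputs are Theorem \ref{thm:Lvalue} (interpolation of $\delta_\chi$ of elliptic units by Hecke $L$-values) and Rohrlich's theorem, both available; the points needing care are the local root-number computation at $p$ and the non-vanishing of the interpolation factor. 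I do not expect a genuine obstacle here: the real difficulties of the paper lie further on, in bounding $V_\infty^*/V_\infty^{*,-}$ and constructing the requisite formal CM points at $p=3$.
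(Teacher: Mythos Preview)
Your proposal is correct and follows essentially the same route as the paper: produce elliptic units $\xi_\varphi\in V_\infty^{*,\epsilon}$ via Theorem~\ref{thm:Lvalue}(2), invoke Rohrlich's nonvanishing theorem to see $\delta_\chi(\xi_\varphi)\neq 0$ for some $\chi\in\Xi^\epsilon$, and use torsion-freeness of $V_\infty^*\simeq\Lambda^2$ to conclude. You are in fact more explicit than the paper on one point: the paper's proof treats a single $\varphi$ of root number $\epsilon$ and then asserts $\rank_\Lambda V_\infty^{*,\pm}\geq 1$ for both signs without further comment, whereas you spell out that a quadratic twist (prime to $p$) flips $W(\varphi)$ and so supplies the other sign.
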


\begin{thm}\label{thm:V+}
There exists an element $\xi\in V_\infty^{*,+}$ such that $\delta(\xi)\in \CO^\times$.
\end{thm}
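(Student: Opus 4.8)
This statement is part of what Section~\ref{sec:Lval} recalls from \cite{Ru} and \cite{BKO}, so the plan is to reproduce their argument; its engine is the explicit reciprocity law of Theorem~\ref{thm:Lvalue}, which identifies the Coates--Wiles derivative of an anticyclotomic projection of elliptic units with an algebraic Hecke $L$-value, combined with Finis's non-vanishing mod~$p$ theorem for such values.

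First I would fix the CM data: an imaginary quadratic field $K$ in which $p$ is inert, and a Hecke character $\varphi$ of $K$ of the relevant infinity type --- attached to a CM elliptic curve with good (hence supersingular) reduction at $p$ --- whose global root number is $W(\varphi)=+1$. After the mild widening of the set-up already carried out in \cite{BKO} to admit more general central values, the existence of such a $\varphi$ is exactly what the results of \cite{Fi} cited in the introduction supply. By Theorem~\ref{thm:Lvalue}(2) the anticyclotomic projection of the elliptic units attached to $\varphi$ lies in $V_\infty^{*,+}$: for $\chi\in\Xi^{-}$ the twisted central value $L(\varphi\chi,1)$ vanishes by the sign of the functional equation, which is precisely the defining vanishing condition for $V_\infty^{*,+}$.

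To upgrade ``nonzero'' to ``$p$-adic unit'' I would bring in an auxiliary rational prime $\ell\nmid Np$, chosen so that $W(\varphi\nu)=+1$ for an anticyclotomic character $\nu$ of $K$ of $\ell$-power conductor (a local sign computation at the primes above $\ell$). Twisting by $\nu$ preserves the supersingular behaviour at $p$ and keeps the $\nu$-isotypic elliptic unit $\xi_\nu$ inside $V_\infty^{*,+}\otimes_\CO\CR$ (the sign of $\varphi\nu\chi$ being $-1$ for $\chi\in\Xi^{-}$, as before), where $\CR=\CO[\nu]$. Finis's theorem then furnishes such a $\nu$ with $L^{\alg}(\varphi\nu,1)\in\CR^{\times}$; feeding this back through Theorem~\ref{thm:Lvalue}(1), and checking that the period ratio and the elementary archimedean, $p$-adic, and $\ell$-adic factors of the reciprocity law are themselves $p$-adic units for the chosen $\varphi$ and $\nu$, gives $\delta(\xi_\nu)\in\CR^{\times}$. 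Finally, since $\CR$ is free over $\CO$, one picks an $\CO$-linear $\pi_0\colon\CR\to\CO$ with $\pi_0(\delta(\xi_\nu))\in\CO^{\times}$ --- possible because $\delta(\xi_\nu)$ is a unit --- and sets $\xi=(\id\otimes\pi_0)(\xi_\nu)\in V_\infty^{*,+}$, so that $\delta(\xi)=\pi_0(\delta(\xi_\nu))\in\CO^{\times}$.

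The only real point of care is the interface with \cite{Fi}: matching normalizations, arranging the local hypotheses of Finis's non-vanishing theorem through the choice of $\ell$, and tracking the $p$-integrality of every period and fudge factor in Theorem~\ref{thm:Lvalue} so that ``the $L$-value is a $p$-unit'' genuinely becomes ``$\delta(\xi_\nu)$ is a $p$-unit''. One should also check that the small-prime behaviour of these inputs causes no difficulty at $p=3$, but this is expected to be routine: the genuinely new $p=3$ work of the paper enters only on the $V_\infty^{*,-}$ side, in the construction of formal CM points (Theorems~\ref{thm:V-} and~\ref{thm:localpts}).
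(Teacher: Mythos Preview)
Your proposal is correct and follows essentially the same route as the paper: choose $\varphi$ with $W(\varphi)=+1$, invoke Theorem~\ref{thm:Lvalue}(2) to place the $\nu$-twisted elliptic unit $\xi_\nu$ in $V_\infty^{*,+}\otimes\CR$, apply Finis to make $\delta(\xi_\nu)=\Omega^{-1}L_{\ff\ell}(\ov{\varphi}\nu,1)$ a unit in $\CR$, and then descend to $\CO$ by an $\CO$-linear projection. Two small clarifications: the root number $W(\varphi)=+1$ is arranged not via \cite{Fi} but by taking $\varphi$ to be a \emph{canonical} Hecke character in the sense of Rohrlich \cite{Ro}; and the paper's choice of $K$ carries the extra hypothesis $\bigl(\tfrac{2}{D_K}\bigr)=+1$ (condition~(1) at the start of \S\ref{sec:Lval}), which is precisely what is needed to feed into Finis's theorem.
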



The basic ideas involve using the relation of elliptic units and Hecke $L$-values, and properties of Hecke $L$-values proved by Rohrlich \cite{Ro1} and Finis \cite{Fi}.

Firstly, we choose an auxiliary imaginary quadratic field. By \cite[Lemma 3.4]{BKO}, there exist infinitely many imaginary quadratic fields $K$ of odd discriminants such that
\begin{enumerate}
  \item $\left(\frac{2}{D_K}\right)=+1$ where $-D_K<0$ is the discriminant of $K$;
  \item $p$ inerts in $K$ and is prime to $h_K$.
\end{enumerate}
In the rest of our paper, $K$ is an imaginary quadratic field satisfying (2). We do not assume that $K$ satisfies (1) except in the proof of Theorem \ref{thm:V+}.  For
a non-zero integral ideal $\fg$ of $K$, we denote by $K(\fg)$ the ray class field of $K$ of
conductor $\fg$. Let $H=K(1)$ be the Hilbert class field of $K$.

Let $\varphi$ be a Hecke character over $K$ with
infinity type $(1, 0)$ of $\ff_\varphi$ such that $\varphi\circ \RN_{H/K}$ corresponds to an elliptic curve $E_{/H}$ which is CM by $\CO_K$, is isogenous to all its $\Gal(H/\BQ)$-conjugate and is good at primes above $p$. We note that if $\varphi$ is a canonical Hecke character (in the sense of \cite{Ro}), such an $E$ always exists.

We fix a smooth Weierstrass model of the elliptic curve $E$ over $\CO\cap H$ and we may assume the period lattice $L$ attached to the N\'{e}ron differential $\omega$ is given by $\Omega\CO_K$ for some $\Omega\in \BC^\times$.
Fix such $\Omega$.

Let $\ell\geq 5$ be a prime such that $\ell$ splits in $K$, $\ell\nmid h_K$ and $p\nmid \ell-1$.
Let $\fX_\ell$ be the set of finite Hecke characters that factor through the anticyclotomic $\BZ_\ell$-extension of $K$.

\begin{thm}\label{thm:Lvalue}
Let $\nu\in \fX_\ell$ be a character of order $\ell^m$.
Let $\CR$ be the integer ring of the finite extension of $\Phi$ generated by the image of $\varphi$ and $\nu$.
Then there exists a $\xi_\nu\in U_\infty^*\otimes\CR$ such that
\begin{enumerate}
  \item the following holds $$\delta(\xi_\nu)=\frac{L_{\ff\ell}(\ov{\varphi}\nu,1)}{\Omega},\quad \delta_\chi(\xi_\nu)=\frac{L_{\ff\ell p}(\ov{\varphi}\nu\chi,1)}{\Omega},$$
      for all finite characters $\chi$ of $\Gal(\Phi_\infty/\Phi_0)$;
  \item the anticyclotomic projection of $\xi_\nu$ lies in $V_\infty^{*,\epsilon}\otimes\CR$ where $\epsilon$ is the root number of $\varphi$.
\end{enumerate}
\end{thm}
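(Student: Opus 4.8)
The plan is to follow the construction of elliptic units and the explicit reciprocity law, exactly as in \cite{Ru} and \cite{BKO}, being careful that nothing in that argument actually required $p>3$. First I would recall the elliptic units themselves: over the ray class field $K(\ff\ell p^{n+1})$ attached to $E$, one has the classical elliptic units $e_n$ coming from the values of Siegel units (or from $\Theta$-functions evaluated at torsion points), compatible under norm maps as $n$ varies. Twisting by the finite character $\nu\in\fX_\ell$ of order $\ell^m$ — concretely, forming the appropriate $\nu$-average of conjugates of $e_n$ — produces a norm-compatible system, hence an element of $\varprojlim_n U(\Phi_n)\otimes_{\BZ_p}\CR$, and after passing to the $\kappa|_\Delta$-eigenspace and tensoring with $T_\pi\CF^{\otimes -1}$ we obtain the desired $\xi_\nu\in U_\infty^*\otimes\CR$. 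Here one uses that $E$ has good supersingular (in fact, good) reduction at $p$ so that $\wh E\simeq\CF$ over $\CO$, identifying the local units at $p$ with the module $U_\infty$ built from $\CF$.

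For part (1), the point is the interpolation formula relating the Coates--Wiles homomorphisms $\delta$ and $\delta_\chi$ applied to elliptic units with special values of the Hecke $L$-function. This is the explicit reciprocity law of Wiles/Coates--Wiles, in the form used by Rubin (also Yager, de Shalit): the Coleman power series of the elliptic unit system is built from the Kronecker--Eisenstein/Katz theta function, and applying $f'/f$ at torsion points and summing against $\chi$ recovers $L_{\ff\ell p}(\ov\varphi\nu\chi,1)/\Omega$, with the un-twisted statement $\delta(\xi_\nu)=L_{\ff\ell}(\ov\varphi\nu,1)/\Omega$ as the conductor-$1$ case (the Euler factor at $p$ and the change of conductor from $\ff\ell$ to $\ff\ell p$ being accounted for by $\delta$ vs.\ $\delta_n$). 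Since $\varphi$ has infinity type $(1,0)$ and $\nu,\chi$ are finite, $\ov\varphi\nu\chi$ has infinity type $(1,0)$ and its $L$-value at $s=1$ is the relevant critical value, so $L(\ov\varphi\nu\chi,1)/\Omega$ is algebraic and lies in $\CR$ after suitably enlarging it; this is where the precise normalization of $\Omega$ (the period of the \Neron differential, $L=\Omega\CO_K$) enters.

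For part (2), I would compute the root number. The functional equation relates $L(\ov\varphi\nu\chi,s)$ to $L(\ov\varphi\nu\chi,2-s)$, and since $\nu\chi$ is anticyclotomic the global root number of $\ov\varphi\nu\chi$ equals $W(\varphi)$ times a local sign at $p$ that depends only on the parity of the conductor of $\nu\chi$ at $p$, i.e.\ on whether $\chi$ has conductor an even or odd power of $p$ (the conductor of $\nu$ being prime to $p$). Thus $L(\ov\varphi\nu\chi,1)=0$ whenever the sign is $-1$, which by the definition of $\Xi^\pm$ is precisely the condition $\chi\in\Xi^{\mp}$ when $W(\varphi)=\pm 1$; combined with part (1) this says $\delta_\chi$ of the anticyclotomic projection of $\xi_\nu$ vanishes for all $\chi\in\Xi^{\mp}$, which is exactly the defining condition for membership in $V_\infty^{*,\epsilon}\otimes\CR$ with $\epsilon=W(\varphi)$. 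One subtlety is that $\delta_\chi$ factors through the anticyclotomic quotient $V_\infty^*$ only after killing $(\sigma-1)U_\infty^*$ for $\sigma\in\Gal(\Phi_\infty/\Psi_\infty)$, so I should check the twisted elliptic unit system is compatible with this projection — but this is formal from the Galois-equivariance property $\delta_\chi(x^\sigma)=\chi(\sigma)^{-1}\delta_\chi(x)$ already recorded in the excerpt.

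I expect the main obstacle, and the only place where the case $p=3$ genuinely needs attention, is not in the $L$-value formula itself (the explicit reciprocity law is insensitive to the residue characteristic) but in ensuring all the auxiliary objects exist with the stated integrality: namely that a suitable $\varphi$ with the required properties exists, that $\CR$ can be taken to be the ring of integers in a finite extension of $\Phi$, and — for the application to Theorem \ref{thm:V+} — that $\nu$ can be chosen (via Finis's non-vanishing-mod-$p$ results) so that the algebraic $L$-value is a $p$-adic unit, which is genuinely delicate when $p=3$ because the relevant congruences and the control of the Tamagawa/fudge factors are tighter. So the bulk of the real work is bookkeeping the normalizations and invoking \cite{Fi} and \cite{Ro1} carefully, rather than any new geometric input at this stage.
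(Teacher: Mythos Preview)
Your proposal is correct and follows essentially the same route as the paper: build $\xi_\nu$ from the norm-compatible system of elliptic units twisted by $\nu$ (the paper phrases this as a chain of maps on $\varprojlim_n H^1(K(\ff\ell^m p^{n+1}),T^{\otimes -1}(1))$ via corestriction, localization at $p$, $\nu$-twist, and projection to the $\Delta$-part), invoke the explicit reciprocity/$L$-value formula for part~(1), and for part~(2) use the root-number computation --- the paper cites Greenberg \cite[p.~247]{Gr} for the precise statement $W(\ov\varphi\nu\chi)=(-1)^{n+1}W(\ov\varphi)$ when $\chi$ has conductor $p^{n+1}$, using only that $p$ is odd and $\ell\nmid\ff$. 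Your closing paragraph about $p=3$ requiring extra care is misplaced for \emph{this} theorem: the paper's argument here is uniform for all odd $p$, and the genuine $p=3$ work (as well as the invocation of Finis) occurs elsewhere --- in the proof of Theorem~\ref{thm:V+} and especially in the local-points construction of \S4.
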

\begin{proof}
As above, we choose an imaginary quadratic field $K$, a prime $p$ that inerts in $K$ and is prime to $h_K$, a $\BQ$-curve $E$ and the associated Hecke character $\varphi$.
Besides, we choose an auxiliary prime $\ell$.
Let $T=T_{\pi}E$.
\begin{enumerate}
  \item Consider the elliptic units $z_{\ff l^m}=(z_{\ff l^m p^{n}})_n\in \varprojlim_{n}H^1(K(\ff l^m p^{n}),T^{\otimes -1}(1))$.
  Let $$M_n=H(E[p^{n+1}])L_m\subset K(\ff \ell^m p^{n+1})$$ where $L_m$ is the $m$-th layer of anticyclotomic $\BZ_\ell$-extension of $K$.
  Let $\nu$ be an anticyclotomic Hecke character over $K$ of order $\ell^m$.
  Consider the composition of the following maps
  $$\begin{aligned}
  &\varprojlim_{n}H^1(K(\ff\ell^m p^{n+1}),T^{\otimes -1}(1))\xrightarrow{\mathrm{cores}}\varprojlim_{n}H^1(M_n,T^{\otimes -1}(1))\\
  &\xrightarrow{\loc_p}\varprojlim_n H^1(M_n\otimes K_p,T^{\otimes -1}(1))\xrightarrow{\nu}\varprojlim_{n}H^1(H(E[p^{n+1}])\otimes K_p,T^{\otimes -1}(1))\otimes\CR\\
  &\xrightarrow{\pr} \varprojlim_{n}H^1(\Phi_n,T^{\otimes -1}(1))\otimes\CR\ra\left(\varprojlim_n H^1(\Phi_n,T^{\otimes -1}(1))\right)^{\Delta}\otimes\CR
  \simeq U_\infty^*\otimes\CR.
  \end{aligned}$$
  Let $\xi_\nu\in U_\infty^*\otimes\CR$ be the image of $z_{\ff\ell^m p^{n+1}}$ under the above map.
  Then we have
  $$\delta_\chi(\xi_\nu)=\frac{L_{\ff\ell p}(\ov{\varphi}\chi\nu,1)}{\Omega}$$
  for all finite characters $\chi$ of $\Gal(\Phi_\infty/\Phi_0)$.

  \item For character $\chi$ of $G^-$ of conductor $p^{n+1}$, Greenberg (\cite[p.247]{Gr}) showed that $W(\ov{\varphi}\nu\chi)=W(\ov{\varphi}\chi)=(-1)^{n+1}W(\ov{\varphi})$ if $p$ is odd and $\ell\nmid \ff$, .
      Therefore $L(\ov{\varphi}\nu\chi,1)=0$ if $(-1)^{n+1}W(\varphi)=-1$ and the theorem follows from (1).
\end{enumerate}
\end{proof}

\begin{proof}[Proof of Theorem \ref{thm:rankgeq1}]
By \cite{Ro1}, for all but finitely many anticyclotomic characters $\rho$,
$$L(1,\rho\varphi)\neq 0,\text{ if }W(\varphi\rho)=1.$$
If $\rho$ is of conductor $p^n$ and $\varphi$ is of root number $\epsilon$, then $W(\varphi\rho)=(-1)^n\epsilon$. Thus there exist infinitely many anticyclotomic characters $\rho$ such that $L(1,\varphi\rho)\neq 0$.
Hence $\delta_\chi(\xi)\neq 0$ for the elliptic units $\xi$ associated to $\varphi$ by the theorem above.
Since $V_\infty^{*}\simeq \Lambda^2$ is torsion-free, we have $\rank_\Lambda V_\infty^{*,\pm}\geq 1$.
\end{proof}

\begin{proof}[Proof of Theorem \ref{thm:V+}]
Suppose that $\varphi$ is canonical. We have $W(\varphi)=+1$ (cf. \cite{Ro}).
Then by \cite{Fi}, for all but finitely many $\nu\in \fX_\ell$, one has
$$\Omega^{-1}L_{\ff}(\ov{\varphi}\nu,1)\in \CR^\times.$$
Fix a $\nu$, Theorem \ref{thm:Lvalue} shows that there is a $\xi_\nu\in V_\infty^{*,+}\otimes\CR$ such that $\delta(\xi_\nu)\in \CR^\times$.
It implies that there exists an element of $V_\infty^{*,+}$ whose image under $\delta$ belongs to $\CO^\times$ .
\end{proof}

\section{Kummer pairing}
We recall the construction of the Kummer pairing $$\langle\ ,\ \rangle: \left(\CF(\Psi_\infty)\otimes_\CO\Phi/\CO\right) \times V_\infty^*\ra \Phi/\CO.$$
Note that $\Theta_n=\Phi_n^{\Delta}$ for all $n\leq \infty$.
  The Kummer sequence
  $$0\ra \CF[\pi^{n+1}]\ra \CF(\ov{\Phi})\xrightarrow{\pi^{n+1}} \CF(\ov{\Phi})\ra 0 $$
  gives us an exact sequence
  $$0\ra \CF(\Theta_n)/\pi^{n+1}\CF(\Theta_n)\ra H^1(\Theta_n,\CF[\pi^{n+1}])\ra H^1(\Theta_n,\CF(\ov{\Phi}))[\pi^{n+1}]\ra 0.$$
  Hazewinkel \cite{Haz} showed that $\cap_n \RN_n\CF(\Theta_n)=0$ if $\CF$ is a Lubin-Tate formal group of height $2$ over $\CO$.
  Hence $\varprojlim \CF(\Theta_n)=0$ and its Tate duality (\cite{Tat}) $\varinjlim H^1(\Theta_n,\CF(\ov{\Phi}))_{p^{n+1}}$ is also zero.
  Taking direct limit of the above exact sequences, we have
  $$\CF(\Theta_n)\otimes\Phi/\CO\simeq H^1(\Theta_\infty,\CF[\pi^\infty])\simeq \Hom(\Gal(\ov{\Phi}/\Phi_\infty),\CF[\pi^\infty])^\Delta\simeq \Hom_\CO(U_\infty,\CF[\pi^\infty]),$$
where the last isomorphism is given by local class field theory.
Therefore we have a perfect pairing
$$\langle\ ,\ \rangle: \left(\CF(\Theta_\infty)\otimes\Phi/\CO\right) \times U_\infty^*\ra \Phi/\CO. $$
Since $\CF(\Theta_\infty)$ has no $p$-torsion, the exact sequence
$$0\ra \CF(\Theta_\infty)\ra \CF(\Theta_\infty)\otimes_\CO\Phi\ra \CF(\Theta_\infty)\otimes \Phi/\CO\ra 0$$
induces an isomorphism $\left(\CF(\Theta_\infty)\otimes\Phi/\CO\right)^{G^+}\simeq \Hom_\CO(V_\infty,\CF[\pi^\infty])$.
However, we have that
$$\left(\CF(\Theta_\infty)\otimes\Phi/\CO\right)^{G^+}/(\CF(\Psi_\infty)\otimes\Phi/\CO)\simeq H^1(G^{+},\CF(\Theta_\infty))\subset H^1(\Psi_\infty,\CF(\ov{\Phi}))=\varinjlim H^1(\Psi_n,\CF(\ov{\Phi}))=0.$$
Here the reason for the last equality is similar to $\varinjlim H^1(\Psi_n,\CF(\ov{\Phi}))=0$.
Hence we have a perfect paring
$$\langle\ ,\ \rangle:\left(\CF(\Psi_\infty)\otimes_\CO\Phi/\CO\right)\times V_\infty^*\ra \Phi/\CO.$$
By Wiles' explicit reciprocity law (\cite{Wil}), the pairing can be described as
$$\langle y\otimes\pi^{-n},x\rangle=\pi^{-1-m-n}\Tr_{\Phi_m/\Phi}(\delta_m(x)\lambda(y))\in\Phi/\CO$$
with  $y\in \CF(\Psi_n)$, $x\in V_\infty^*$ and some sufficiently large $m$.

For any anticyclotomic character $\chi$ of conductor dividing $p^{n+1}$, let
$$\lambda_\chi:\CF(\Psi_\infty)\ra \Phi_\infty,\quad y\mapsto \frac{1}{\pi^n}\sum_{\gamma\in \Gal(\Psi_n/\Phi)}\chi^{-1}(\gamma)\lambda(y)^\gamma,\quad y\in \CF(\Psi_n).$$
Denote
$$A^{\pm}:=\{y\in \CF(\Psi_\infty)|\lambda_\chi(y)=0\text{ for all }\chi\in \Xi^{\pm}\}.$$

We recall the following properties of $\lambda_\chi$.
\begin{lem}[{\cite[Lemma 5.5]{Ru}}]\label{lem:lambda}\hfill
\begin{enumerate}
  \item If $y\in \CF(\Psi_\infty)$, $\chi$ is a finite character of $G^-$ and $\sigma\in G^-$, then $\lambda_\chi(y^\sigma)=\chi(\sigma)\lambda_\chi(y)$;
  \item If $y\in \CF(\Psi_n)$ and the conductor of $\chi$ is greater than $p^{n+1}$, then $\lambda_\chi(y)=0$;
  \item If $y\in \CF(\Psi_\infty)$, then $\lambda(y)=\sum\lambda_\chi(y)$, summing over all finite characters $\chi$ of $G^-$;
  \item If $m\geq n$, $y\in \CF(\Psi_m)$ and $\chi$ is a character of $\Gal(\Psi_n/\Phi)$, then $\lambda_\chi(\RN_{m/n}y)=p^{m-n}\lambda_\chi(y)$.
  \item $A^+\cap A^-=0$;
  \item $(A^+\otimes \Phi/\CO) + (A^-\otimes\Phi/\CO)=\CF(\Psi_\infty)\otimes\Phi/\CO$.
\end{enumerate}
\end{lem}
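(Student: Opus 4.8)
Since the statement is \cite[Lemma 5.5]{Ru}, I would recall Rubin's argument. Items (1)--(4) are formal identities for character sums over the finite abelian group $\Gal(\Psi_n/\Phi)$, resting on two facts about the formal logarithm: $\lambda$ is a homomorphism of group schemes over $\Phi$, so $\lambda(y^\sigma)=\lambda(y)^\sigma$ for $\sigma\in G^-$ and $\lambda(\RN_{m/n}y)=\Tr_{\Psi_m/\Psi_n}\lambda(y)$, together with orthogonality of characters. Granting these: (1) is the substitution $\gamma\mapsto\sigma^{-1}\gamma$ in the defining average, using that $G^-$ is abelian; (2) follows by grouping the average over $\Gal(\Psi_k/\Phi)$ (where $p^{k+1}$ is the conductor of $\chi$, so $k\ge n+1$) into cosets of $\Gal(\Psi_k/\Psi_{k-1})$, on each of which $\lambda(y)$ is constant because $y\in\CF(\Psi_{k-1})$, the inner sum $\sum_{\tau\in\Gal(\Psi_k/\Psi_{k-1})}\chi^{-1}(\tau)$ vanishing as $\chi$ is nontrivial on this group of order $p$; (3) is Fourier inversion on $\Gal(\Psi_n/\Phi)$ applied to $y\in\CF(\Psi_n)$, the sum being finite by (2); and (4) re-indexes the double sum produced by $\Tr_{\Psi_m/\Psi_n}$ as a single sum over $\Gal(\Psi_m/\Phi)$, using that the inflation of $\chi$ is trivial on $\Gal(\Psi_m/\Psi_n)$.

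For (5): if $y\in A^+\cap A^-$ then $\lambda_\chi(y)=0$ for every anticyclotomic character $\chi$, since $\Xi^+\cup\Xi^-$ exhausts them, so $\lambda(y)=0$ by (3). As noted in the construction of the Kummer pairing above, $\CF(\Theta_\infty)$ has no $p$-power torsion, hence neither does its subgroup $\CF(\Psi_\infty)$; thus $\lambda$ is injective there and $y=0$.

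The substance is (6), where the plan is to pass to $\Phi$-coefficients, in which everything splits, and then descend. Since $\CF(\Psi_n)$ is torsion-free, $\lambda$ induces an isomorphism of $\Phi$-vector spaces $\CF(\Psi_n)\otimes_\CO\Phi\xrightarrow{\ \sim\ }\Psi_n$. By the normal basis theorem $\Psi_n\otimes_\Phi\ov{\BQ}_p\cong\ov{\BQ}_p[\Gal(\Psi_n/\Phi)]$ breaks into one-dimensional $\chi$-isotypic lines, and by (1) each $\lambda_\chi$ is, up to a nonzero scalar, the projection onto the $\chi$-line. Galois conjugation of character values preserves conductors, so the even-conductor and odd-conductor isotypic parts descend to $\Phi$, giving $\Psi_n=\Psi_n^+\oplus\Psi_n^-$; and by (2), which kills $\lambda_\chi$ for characters of conductor $>p^{n+1}$, the preimage $\lambda^{-1}(\Psi_n^\pm)$ is exactly $(A^\pm\cap\CF(\Psi_n))\otimes_\CO\Phi$. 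Taking the union over $n$ gives $\CF(\Psi_\infty)\otimes_\CO\Phi=(A^+\otimes_\CO\Phi)\oplus(A^-\otimes_\CO\Phi)$. Finally, applying $-\otimes_\CO\Phi/\CO$ and using that $A^\pm$ is $\CO$-torsion-free (hence flat), the exact sequence $0\to A^\pm\to A^\pm\otimes_\CO\Phi\to A^\pm\otimes_\CO\Phi/\CO\to0$ identifies the image of $A^\pm\otimes_\CO\Phi/\CO$ in $\CF(\Psi_\infty)\otimes_\CO\Phi/\CO$ with the image of $A^\pm\otimes_\CO\Phi$; since these images span, (6) follows.

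\textbf{Main obstacle.} The delicate point is entirely in (6): the bookkeeping linking the clean ``$\otimes\Phi$'' decomposition with the ``$\otimes\Phi/\CO$'' statement actually asserted. One must verify that inverting $p$ loses no information (flatness of $A^\pm$ over $\CO$) and that $A^\pm\otimes_\CO\Phi$ really equals the claimed sum of isotypic lines, which is exactly where (2) is used --- to keep the character sums finite and to exclude spurious large-conductor contributions. Everything else is the routine character-sum computation of \cite[\S5]{Ru}.
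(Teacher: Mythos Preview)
The paper does not supply a proof of this lemma at all; it merely cites \cite[Lemma~5.5]{Ru}. Your write-up correctly reconstructs Rubin's argument and can stand as a proof: items (1)--(4) are indeed formal character-sum identities resting on $\lambda(y^\sigma)=\lambda(y)^\sigma$ and $\lambda(\RN_{m/n}y)=\Tr_{\Psi_m/\Psi_n}\lambda(y)$; (5) follows from (3) and injectivity of $\lambda$; and your approach to (6)---decompose $\CF(\Psi_n)\otimes_\CO\Phi\simeq\Psi_n$ into its $\Xi^+$- and $\Xi^-$-isotypic pieces over $\Phi$ (the relevant idempotents are $\Gal(\ov\BQ_p/\Phi)$-stable since Galois-conjugate characters share a conductor), identify these with $(A^\mp\cap\CF(\Psi_n))\otimes_\CO\Phi$ using (2), then push down to $\Phi/\CO$---is correct. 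The final step is cleanest phrased via the snake lemma applied to the two exact rows $0\to M\to M\otimes\Phi\to M\otimes\Phi/\CO\to0$ for $M=A^+\oplus A^-$ and $M=\CF(\Psi_\infty)$: surjectivity of the middle vertical forces surjectivity on the right. You might note explicitly that $\CF(\Psi_\infty)/A^\pm$ is $\CO$-torsion-free (if $py\in A^\pm$ then $p\lambda_\chi(y)=0$ in a field, so $y\in A^\pm$), which makes the map $A^\pm\otimes\Phi/\CO\to\CF(\Psi_\infty)\otimes\Phi/\CO$ injective, so that the ``sum of images'' really is the sum asserted in (6).

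One cosmetic remark: with the paper's normalization $\pi=-p$ and the factor $\pi^{-n}$ in the definition of $\lambda_\chi$, the identities in (3) and (4) literally come out with $(-1)^n$ and $\pi^{m-n}$ rather than $1$ and $p^{m-n}$. This is a harmless sign discrepancy in the paper's transcription of Rubin's conventions and affects none of the vanishing statements or the uses of the lemma downstream; you need not worry about it.
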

%
%
%
%
%

\begin{prop}[{\cite[Proposition 5.6]{Ru}}]\label{prop:Kum}
Under the Kummer pairing $(\CF(\Psi_\infty)\otimes_\CO\Phi/\CO)\times V_\infty^*\ra \Phi/\CO$, the annihilator of $V_\infty^{*,\pm}$ is $A^\pm\otimes\Phi/\CO$.
\end{prop}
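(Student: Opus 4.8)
The plan is to reduce both inclusions to a single ``spectral'' expansion of the Kummer pairing coming from Wiles' explicit reciprocity law, together with the properties of $\lambda_\chi$ in Lemma~\ref{lem:lambda} and the perfectness of the pairing. Concretely, I would first establish that for $y\in\CF(\Psi_n)$, $x\in V_\infty^*$ and $k\geq 0$,
$$\langle y\otimes\pi^{-k},\,x\rangle \;=\; \sum_{\chi}\, c_\chi\,\delta_\chi(x)\,\lambda_\chi(y)\pmod{\CO},$$
where $\chi$ runs over the finite characters of $G^-$ of conductor at most $p^{n+1}$ (a finite set, by Lemma~\ref{lem:lambda}(2)) and each $c_\chi$ is an explicit nonzero constant, a power of $\pi$. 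This comes from the reciprocity-law formula $\langle y\otimes\pi^{-k},x\rangle=\pi^{-1-m-k}\Tr_{\Phi_m/\Phi}(\delta_m(x)\lambda(y))$ by expanding $\lambda(y)=\sum_\chi\lambda_\chi(y)$ via Lemma~\ref{lem:lambda}(3) and observing that $\lambda_\chi(y)$ lies in the $\chi$-eigenspace for $\Gal(\Phi_m/\Phi)$ while $\delta_\chi(x)$ lies in the $\chi^{-1}$-eigenspace; orthogonality of characters under the trace form makes $\Tr_{\Phi_m/\Phi}(\delta_m(x)\lambda_\chi(y))$ pick out only the $\chi^{-1}$-component of $\delta_m(x)$, which the norm-compatibility of the Coates--Wiles derivatives $\delta_m,\delta_n$ identifies with a nonzero multiple of $\delta_\chi(x)$.

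Granting this, the inclusion $A^\pm\otimes\Phi/\CO\subseteq\mathrm{ann}(V_\infty^{*,\pm})$ is immediate: every finite character of $G^-$ has conductor an even or an odd power of $p$, hence lies in $\Xi^+$ or $\Xi^-$, and for $y\in A^\pm$ and $x\in V_\infty^{*,\pm}$ every summand vanishes — those with $\chi\in\Xi^\pm$ because $\lambda_\chi(y)=0$, those with $\chi\in\Xi^\mp$ because $\delta_\chi(x)=0$. For the opposite inclusion, since the pairing is perfect and $V_\infty^{*,\pm}$ is closed (being the common kernel of the continuous functionals $\delta_\chi$), it is enough by the bi-duality of the perfect pairing to prove $\mathrm{ann}_{V_\infty^*}(A^\pm\otimes\Phi/\CO)\subseteq V_\infty^{*,\pm}$. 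So assume $x\in V_\infty^*$ pairs trivially with $A^\pm\otimes\Phi/\CO$; for $y\in A^\pm$ the spectral formula collapses to $\sum_{\chi\in\Xi^\mp}c_\chi\,\delta_\chi(x)\,\lambda_\chi(y)=0$. Fix $\chi_0\in\Xi^\mp$. I would first produce $y_0\in A^\pm$ with $\lambda_{\chi_0}(y_0)\neq 0$: pick $w\in\CF(\Psi_\infty)$ with $\lambda_{\chi_0}(w)\neq 0$ (such $w$ exists because $\Psi_n$ realizes $\chi_0$, the normal basis theorem giving $\CF(\Psi_n)$ a nonzero $\chi_0$-component after $\lambda$), write a sufficiently high $\pi$-power multiple of the class of $w$ in $\CF(\Psi_\infty)\otimes\Phi/\CO$ as a sum of an element of $A^+\otimes\Phi/\CO$ and one of $A^-\otimes\Phi/\CO$ by Lemma~\ref{lem:lambda}(6), and apply $\lambda_{\chi_0}$; the $A^\mp$-summand dies, so the $A^\pm$-summand has nonzero $\lambda_{\chi_0}$, whence some $y_0\in A^\pm$ does.

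Now $A^\pm$ is $G^-$-stable by Lemma~\ref{lem:lambda}(1), hence a $\Lambda$-submodule, and $\lambda_\chi(h\cdot y_0)=\widehat h(\chi)\lambda_\chi(y_0)$ for $h\in\Lambda$, where $\widehat h(\chi)$ is the evaluation of $h$ at $\chi$. Feeding $h\cdot y_0$ into the vanishing relation gives $\sum_{\chi\in S}b_\chi\,\widehat h(\chi)=0$ for all $h\in\Lambda$, where $S$ is the finite set of $\chi\in\Xi^\mp$ with $\delta_\chi(x)\neq 0$ and $\lambda_\chi(y_0)\neq 0$ and $b_\chi:=c_\chi\,\delta_\chi(x)\,\lambda_\chi(y_0)$; after enlarging coefficients to split the characters in $S$ each $b_\chi$ is nonzero, since the product of a nonzero $\chi^{-1}$-eigenvector with a nonzero $\chi$-eigenvector is a nonzero element of the trivial eigenspace. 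As distinct characters give distinct $\CO$-algebra maps $\Lambda\to\ov{\BQ}_p$, the tuple $(\widehat h(\chi))_{\chi\in S}$ ranges over all of $\prod_{\chi\in S}\ov{\BQ}_p$ as $h$ varies (Chinese remainder theorem), forcing every $b_\chi=0$ and hence $S=\varnothing$; in particular $\delta_{\chi_0}(x)=0$. Since $\chi_0\in\Xi^\mp$ was arbitrary, $x\in V_\infty^{*,\pm}$, and passing to annihilators completes the proof.

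I expect the main obstacle to be the spectral formula, and specifically the verification that every $c_\chi$ is nonzero: this is exactly the content of Wiles' explicit reciprocity law as in \cite[\S5]{Ru}, and it requires care because $\lambda$ lives on the anticyclotomic tower $\Psi_\infty$ while the Coates--Wiles derivatives $\delta_m$ live on $\Phi_\infty$, so matching them involves the norm-compatibility (up to an Euler-type factor) of the $\delta_m$ and a consistent choice of eigenspace normalizations and of a large enough layer $m$. Everything downstream of the formula is formal. (An alternative, slightly less self-contained, route to the reverse inclusion replaces the normal-basis input for producing $y_0$ by the non-vanishing of $\delta_{\chi_0}$ on $V_\infty^{*,\pm}$, which follows from the elliptic units attached to a Hecke character of root number $\pm1$ together with Theorem~\ref{thm:Lvalue} and Rohrlich's non-vanishing theorem \cite{Ro1}; the argument above has the advantage of staying entirely within this section.)
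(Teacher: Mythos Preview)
Your approach is essentially the same as the paper's: both expand the pairing via Wiles' reciprocity law and Lemma~\ref{lem:lambda}(3) into $\langle y\otimes\pi^{-n},x\rangle=\sum_\chi\delta_\chi(x)\lambda_\chi(y)$, then isolate a single character to get the reverse inclusion. Two minor simplifications in the paper are worth noting: first, the computation (writing $\Tr_{\Phi_m/\Phi}=\sum_\gamma$, using $\lambda_\chi(y^\gamma)=\chi(\gamma)\lambda_\chi(y)$, and recognizing $\pi^{-m-1}\sum_\gamma\chi(\gamma)\delta_m(x)^\gamma=\delta_\chi(x)$) shows all your constants $c_\chi$ equal $1$, so there is nothing to check about their nonvanishing; second, in place of your $\Lambda$-action plus Chinese remainder argument, the paper simply feeds $y^\gamma$ for $\gamma\in\Gal(\Psi_n/\Phi)$ into the vanishing relation and sums against $\chi^{-1}(\gamma)$, which by orthogonality collapses directly to $\pi^n\delta_\chi(x)\lambda_\chi(y)=0$.
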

\begin{proof}
 If $y\in \CF(\Psi_\infty)$ and $x\in V_\infty^*$, then the above formula and Lemma \ref{lem:lambda} yields
      $$\begin{aligned}
      \langle y\otimes \pi^{-n},x\rangle &=\pi^{-1-m-n}\Tr_{\Phi_m/\Phi}(\delta_m(x)\lambda(y))=\pi^{-1-m-n}\sum_{\gamma}\delta_m(x)^\gamma\lambda(y^\gamma)\\
      &=\pi^{-1-m-n}\sum_{\gamma}\delta_m(x)^\gamma\sum_\chi\lambda_\chi(y^\gamma)=\sum_\chi\pi^{-1-m-n}\sum_\gamma\delta_m(x)^\gamma\chi(\gamma)\lambda_\chi(y)\\
      &=\sum_\chi\delta_\chi(x)\lambda_\chi(y).
      \end{aligned}$$
      By definition, $V_\infty^{*,\pm}$ annihilate $A^{\pm}\otimes\Phi/\CO$.

      Now suppose that $x\in V_\infty^*$ and $x$ annihilates $A^{\pm}\otimes\Phi/\CO$ and $\chi\in \Xi^\mp$.
      Choose $y\in A^\pm$ such that $\lambda_\chi(y)\neq 0$.
      Then the above computation shows that
      $$\sum_\rho \delta_\rho(x)\lambda_\rho(y^\gamma)=0$$
      for every $\gamma$.
      Thus
      $$\pi^n\delta_\chi(x)\lambda_\chi(y)=\sum_\rho\sum_\gamma\chi^{-1}(\gamma)\delta_\rho(x)\lambda_\rho(y^\gamma)=0.$$
      Hence $\delta_\chi(x)=0$, i.e., $x\in V_\infty^{*,\pm}$.
\end{proof}

Now we have the following corollary by Lemma \ref{lem:lambda} (6) and Proposition \ref{prop:Kum}.

\begin{cor}\label{cor1}
$V_\infty^{*,+}\cap V_\infty^{*,-}=0$.
\end{cor}

\section{Local points}

In this section, we will prove the following theorem.
\begin{thm}\label{thm:V-}
We have
$$(A^-\otimes\Phi/\CO)^{G^-}=\CF(\Phi)\otimes\Phi/\CO.$$
\end{thm}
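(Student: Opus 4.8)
The plan is to prove the equality of $\Phi/\CO$-modules by a counting argument over the finite layers, exactly as outlined in the strategy section. One inclusion is clear: since $\CF(\Phi)\otimes\Phi/\CO$ is fixed by $G^-$ and lies in $A^-\otimes\Phi/\CO$ (every nontrivial anticyclotomic $\chi\in\Xi^+$ has conductor $>1$, so $\lambda_\chi$ kills $\CF(\Phi)$ by Lemma~\ref{lem:lambda}(2)), we have $\CF(\Phi)\otimes\Phi/\CO\subseteq (A^-\otimes\Phi/\CO)^{G^-}$. For the reverse inclusion, set $A_n^-=A^-\cap\CF(\Psi_n)$ and $G_n^-=\Gal(\Psi_n/\Phi)$. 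Since $A^-=\varinjlim A_n^-$ and cohomology commutes with direct limits, it suffices to bound $\wh H^0(G_n^-,A_n^-)=\CF(\Phi)/\RN_{n/0}(A_n^-)$ and show that in the limit the Tate cohomology contribution vanishes, i.e. that $(A^-\otimes\Phi/\CO)^{G^-}/(\CF(\Phi)\otimes\Phi/\CO)$ is trivial. Concretely I would reduce, as the excerpt indicates, to the inequality
$$
|\wh H^0(G_n^-,A_n^-)| \;=\; |\CF(\Phi)/\RN_{n/0}(A_n^-)| \;\le\; p^{n-1}.
$$

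First I would analyze the structure of $A_n^-$ using Lemma~\ref{lem:lambda}. The maps $\lambda_\chi$ for $\chi$ ranging over characters of $G_n^-$ of even conductor give a system of linear conditions cutting out $A_n^-$ inside $\CF(\Psi_n)$; combined with the norm-compatibility $\lambda_\chi(\RN_{m/n}y)=p^{m-n}\lambda_\chi(y)$ from Lemma~\ref{lem:lambda}(4), this lets one compute indices of norm subgroups in terms of ranks/corank data already packaged in the $A^\pm$ formalism. In parallel I would invoke the global input: by the Kummer pairing (Proposition~\ref{prop:Kum}) $A^-\otimes\Phi/\CO$ is the exact annihilator of $V_\infty^{*,-}$, and $V_\infty^*$ is free of rank $2$ over $\Lambda$ with $\rank_\Lambda V_\infty^{*,\pm}=1$ (Corollary~\ref{cor1} and the rank bounds), so $A^-\otimes\Phi/\CO$ is a cofree rank-one $\Lambda$-module up to bounded error; the point of the theorem is to pin that error to zero at the bottom layer via Nakayama.

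The heart of the matter, and the main obstacle, is exhibiting enough elements of $A_n^-$ whose norms down to $\CF(\Phi)$ are locally $p$-indivisible — precisely the content of Theorem~\ref{thm:localpts} referenced in the strategy. This is where the construction of formal CM points enters: using the modular parametrization $\pi\colon X_0(N)\to E_{/\CO}$ (after passing to a fine moduli problem $X(\Gamma_0(N)\cap\Gamma_1(M))$), one picks a supersingular point in characteristic $p$ mapping to a suitable point of $E(\BF_{p^2})$ at which $\pi$ is unramified, identifies the formal completions $\wh{X_0(N)}\simeq\wh E\simeq\CF$, and transports a chosen $Q\in\wh E(\fm)\setminus p\wh E(\fm)$ back to a point $P\in X_0(N)(\CO)$ represented by a fake-CM elliptic curve. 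Gross's system of compatible formal CM points attached to $P$ then produces the desired norm-compatible family $y_s$ with $\tr_{s+1/s}y_{s+1}=-y_{s-1}$, $\tr_{1/0}y_1=-y$, $y\notin p\CF(\Phi)$, and $y_s\in A^-$ for odd $s$. Granting Theorem~\ref{thm:localpts}, the index bound $|\CF(\Phi)/\RN_{n/0}(A_n^-)|\le p^{n-1}$ follows by feeding these points into the trace/norm relations, and then Nakayama's lemma applied to $V_\infty^*/V_\infty^{*,-}$ — which by Proposition~\ref{prop:Kum} is Pontryagin dual to $(A^-\otimes\Phi/\CO)^{G^-}$ — forces $V_\infty^*/V_\infty^{*,-}$ to be free of rank one over $\Lambda$, equivalently the asserted equality $(A^-\otimes\Phi/\CO)^{G^-}=\CF(\Phi)\otimes\Phi/\CO$. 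The delicate $p=3$ issues all live inside the formal-CM-point construction (congruences and the unramifiedness of $\pi$ at the chosen supersingular point), which is the technical core that must be handled separately from the $p>3$ case of Burungale–Kobayashi–Ota.
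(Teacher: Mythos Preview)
Your outline has the right ingredients --- the easy inclusion, the reduction to finite layers, the target inequality $|\wh H^0(G_n^-,A_n^-)|\le p^{n-1}$, and the recognition that Theorem~\ref{thm:localpts} supplies the norm-compatible points needed to verify that inequality --- but there is a genuine gap in the bridge from the $\wh H^0$ bound to the statement of the theorem.

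The quantity you must kill is $(A^-\otimes\Phi/\CO)^{G^-}/(\CF(\Phi)\otimes\Phi/\CO)$, and via the long exact sequence for $0\to A^-\to A^-\otimes\Phi\to A^-\otimes\Phi/\CO\to 0$ this quotient is identified with $H^1(G^-,A^-)=\varinjlim_n H^1(G_n^-,A_n^-)$, \emph{not} with a limit of $\wh H^0$'s. Your bound $|\wh H^0(G_n^-,A_n^-)|\le p^{n-1}$ says nothing about $H^1$ on its own; the missing piece is Rubin's Herbrand quotient computation $h_n(A_n^-)=|\wh H^0|/|H^1|=p^{n-1}$ for odd $n$ (\cite[Lemma~7.1]{Ru}). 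Combining the two gives $|H^1(G_n^-,A_n^-)|\le 1$, hence $H^1(G^-,A^-)=0$, which is the theorem. You never invoke this Herbrand quotient, so the step ``index bound $\Rightarrow$ equality'' is unjustified in your write-up.

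Moreover, your closing appeal to Nakayama is logically inverted. Nakayama is how the paper deduces Corollary~\ref{cor3} (freeness of $V_\infty^*/V_\infty^{*,-}$) \emph{from} Theorem~\ref{thm:V-}, not the other way around: one needs $(A^-\otimes\Phi/\CO)^{G^-}\simeq\Phi/\CO$ first to conclude that $(V_\infty^*/V_\infty^{*,-})/(\gamma-1)\simeq\CO$ is cyclic. Invoking Nakayama to prove Theorem~\ref{thm:V-} itself is circular. Drop the Nakayama step here, insert the identification with $H^1(G^-,A^-)$ and the Herbrand quotient input, and your argument becomes the paper's.
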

\begin{cor}\label{cor3}
The $\Lambda$-module $V_\infty^*/V_\infty^{*,-}$ is free of rank one.
\end{cor}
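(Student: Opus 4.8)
The plan is to deduce Corollary~\ref{cor3} from Theorem~\ref{thm:V-} by a Nakayama-type argument carried out on the Pontryagin dual. Write $W:=V_\infty^*/V_\infty^{*,-}$ and $B:=A^-\otimes\Phi/\CO$, let $I\subset\Lambda$ be the augmentation ideal of $G^-$, and let $\fM=(\pi)+I$ be the maximal ideal of $\Lambda$. Since $V_\infty^*\cong\Lambda^2$, the module $W$ is finitely generated over $\Lambda$, and it is enough to show that $W$ is cyclic and has $\Lambda$-rank one.

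First I would identify $W$ with the Pontryagin dual $B^\vee:=\Hom_\CO(B,\Phi/\CO)$. The Kummer pairing $\langle\ ,\ \rangle\colon\CF(\Psi_\infty)\otimes_\CO\Phi/\CO\times V_\infty^*\to\Phi/\CO$ is perfect, and by Proposition~\ref{prop:Kum} the annihilator of $V_\infty^{*,-}$ is precisely $B$; dualizing the tautological sequence $0\to V_\infty^{*,-}\to V_\infty^*\to W\to 0$ and invoking perfectness yields a $\Lambda$-module isomorphism $W\xrightarrow{\sim}B^\vee$. (Depending on the normalization of the pairing this isomorphism may be semilinear for the involution $g\mapsto g^{-1}$ of $\Lambda$; this changes nothing below, as the involution fixes $I$ and $\fM$ and preserves freeness of rank one.)

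Next I would compute the $G^-$-coinvariants $W_{G^-}:=W/IW$. Pontryagin duality is exact and interchanges coinvariants with invariants, so $W_{G^-}\cong(B^{G^-})^\vee$. By Theorem~\ref{thm:V-}, $B^{G^-}=(A^-\otimes\Phi/\CO)^{G^-}=\CF(\Phi)\otimes\Phi/\CO$. Since $\CF$ has height $2$ and $\Phi_0/\Phi$ is a nontrivial ramified extension, $\CF(\Phi)$ is $p$-torsion-free; being finitely generated over $\CO$ of $\CO$-rank one, it is then free: $\CF(\Phi)\cong\CO$. Hence $B^{G^-}\cong\CO\otimes_\CO\Phi/\CO=\Phi/\CO$, so $W_{G^-}\cong(\Phi/\CO)^\vee\cong\CO$. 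Consequently $W/\fM W\cong W_{G^-}/\pi W_{G^-}\cong\CO/\pi\CO$ is one-dimensional over the residue field, and Nakayama's lemma shows $W$ is a cyclic $\Lambda$-module, say $W\cong\Lambda/J$.

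Finally, the rank count. Theorem~\ref{thm:rankgeq1} gives $\rank_\Lambda V_\infty^{*,\pm}\geq 1$, while Corollary~\ref{cor1} gives $V_\infty^{*,+}\cap V_\infty^{*,-}=0$; since $V_\infty^{*,+}\oplus V_\infty^{*,-}$ then embeds into $V_\infty^*\cong\Lambda^2$, each summand has rank exactly one, so $\rank_\Lambda W=2-1=1$. As $\Lambda$ is an integral domain, $\Lambda/J$ has $\Lambda$-rank one only when $J=0$; therefore $W\cong\Lambda$ is free of rank one, as claimed. Given Theorem~\ref{thm:V-}, the argument is entirely formal — the only points requiring a little care are the duality bookkeeping in the first two steps and the elementary identification $\CF(\Phi)\cong\CO$. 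The substantive difficulty is already packaged into Theorem~\ref{thm:V-} itself: constructing enough norm-compatible local points in $A^-$ whose trace down to the bottom layer is $p$-indivisible.
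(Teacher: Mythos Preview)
Your argument is correct and follows the same route as the paper: dualize via the Kummer pairing to identify $(V_\infty^*/V_\infty^{*,-})_{G^-}$ with the $\CO$-dual of $(A^-\otimes\Phi/\CO)^{G^-}$, apply Theorem~\ref{thm:V-} to see this is $\CO$, and conclude cyclicity by Nakayama. Your write-up is in fact more complete than the paper's terse version, since you spell out the rank-count step (using Theorem~\ref{thm:rankgeq1} and Corollary~\ref{cor1}) needed to pass from ``cyclic'' to ``free of rank one,'' which the paper leaves implicit.
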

\begin{proof}
Note that
$$\CF(\Phi)\otimes\Phi/\CO\simeq (A^-\otimes\Phi/\CO)^{G^-}\simeq \Hom\left((V_\infty^*/V_\infty^{*,-})/(\gamma-1),\Phi/\CO\right)$$
where $\gamma$ is the topological generator of $G^-$.
Hence $(V_\infty^*/V_\infty^{*,-})/(\gamma-1) \simeq \CO$ generated by one element.
By Nakayama's lemma, $V_\infty^*/V_\infty^{*,-}$ is also generated by one element.
Hence the $\Lambda$-module $V_\infty^*/V_\infty^{*,-}$ is free of rank one.
\end{proof}

We will construct a system of local points in $\CF(\Psi_n)$, which can be used to show that $(A^-\otimes\Phi/\CO)^{G^-}$ is isomorphic to the divisible module $\CF(\Phi)\otimes\Phi/\CO$. So the dual module (under Kummer pairing) $V_\infty^*/V_\infty^{*,-}$ is free.

Let $E$ be an elliptic curve over $\BQ$ with good supersingular reduction at $p$.
Consider the modular parametrization $\pi: X_0(N)\ra E$ over $\BQ$.
We may assume $\pi$ is strong Weil by choosing $E$ in its isogeny class.
By the N\'eron mapping property, $\pi$ extends to a morphism between smooth models over $\BZ_p$.

\subsection{A special supersingular elliptic curve}
\begin{lem}[{\cite[Lemma 5.1]{BKO}}]\label{lem:LT}
Let $q=p^2$ and $\ov{A}$ be an elliptic curve over $\BF_q$ with $a_q(\ov{A})=\pm 2p$.
\begin{enumerate}
  \item Any finite subgroup $\ov{A}(\ov{\BF}_q)$ is defined over $\BF_q$.
  \item For $A$ an elliptic curve over $\CO$ which is a lift of $\ov{A}$, the associated formal group $\wh{A}$ is Lubin-Tate with parameter $\mp p$.
\end{enumerate}
\end{lem}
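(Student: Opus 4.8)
The plan is to use the classical theory of supersingular elliptic curves over finite fields together with the Lubin-Tate classification of one-dimensional formal groups of height $2$ over $\CO$. For part (1), the key observation is that $a_q(\ov A)=\pm 2p$ with $q=p^2$ forces the $q$-power Frobenius endomorphism $\phi$ of $\ov A$ to satisfy $\phi^2\mp 2p\phi+p^2=(\phi\mp p)^2=0$, so that $\phi=\pm p$ acts on $\ov A(\ov{\BF}_q)$ as multiplication by $\pm p$; in particular $\phi$ fixes every point of any finite subgroup scheme (its \'etale part) since it acts as an integer. More precisely, for a finite subgroup $C\subset\ov A(\ov{\BF}_q)$ of order prime to $p$, the group $C$ is $\phi$-stable and $\phi$ acts on it by the scalar $\pm p$, hence $C\subset\ov A(\BF_q)$; for the $p$-part, $\ov A$ being supersingular has a unique (infinitesimal, hence trivially $\BF_q$-rational as a subgroup scheme) subgroup of each $p$-power order, so nothing to check. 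One should phrase this uniformly: any finite subgroup scheme $C$ is the kernel of an isogeny $\ov A\to\ov A'$ defined over $\ov{\BF}_q$, and since $\phi\mp p$ kills $\ov A$, the subgroup is defined over $\BF_q$.

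For part (2), let $A/\CO$ be a lift of $\ov A$. Since $\ov A$ is supersingular, $\wh A$ is a one-dimensional formal group of height $2$ over $\CO$, and by the Lubin-Tate theory (as in \cite{Wi}, \cite{Haz}) every such formal group over the ring of integers of the unramified quadratic extension is, up to isomorphism, a Lubin-Tate formal group for some uniformizer. To pin down the uniformizer I would look at the action of the Frobenius: reducing mod $p$, $\End(\wh A)\otimes\CO$ acts on $\wh A\bmod p=\wh{\ov A}$, and the $q$-Frobenius $\phi\in\End(\ov A)$ induces the $q$-power map on $\wh{\ov A}$, which for a Lubin-Tate group with parameter $\varpi$ is congruent to $[\varpi^2/p\cdot\ ]$ — more transparently, the relation $(\phi\mp p)^2=0$ together with $\phi^2=[q]=[p^2]$ up to a unit shows that $\phi$ acts on the formal group as $\pm[p]\circ(\text{unit})$, forcing the uniformizer attached to $\wh A$ to be $\mp p$ times a unit, and one may normalize it to be exactly $\mp p$. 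Concretely: $\wh A$ has a unique $\CO$-module structure, and the element $\pm p\in\CO$ acts as an endomorphism reducing to $\pm$Frobenius$^{?}$... the cleanest route is to invoke \cite[Lemma 5.1]{BKO} verbatim, since the statement is quoted, but the self-contained argument is the one just sketched.

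The main obstacle is the careful bookkeeping in part (2): identifying \emph{which} uniformizer $\mp p$ (as opposed to merely \emph{some} uniformizer) governs $\wh A$, which requires matching the sign of $a_q(\ov A)$ with the sign of the Frobenius eigenvalue and then transporting this through the equivalence of categories between supersingular formal groups and Lubin-Tate modules. Since the paper only needs this for a curve $E$ with $\wh E\simeq\CF$ where $\CF$ is Lubin-Tate with parameter $\pi=-p$, the relevant case is $a_q=-2p$, and one should simply cite \cite{BKO} for the precise statement while noting that the proof is the standard Honda-Tate plus Lubin-Tate argument outlined above.
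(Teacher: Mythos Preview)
The paper gives no proof of this lemma; it is stated as a direct quotation of \cite[Lemma~5.1]{BKO}. Your final recommendation to simply cite that reference is therefore exactly what the paper does.

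Your sketch for part (1) is correct in substance but skips one step: from $(\phi\mp p)^2=0$ you pass immediately to ``$\phi=\pm p$ acts as multiplication by $\pm p$''. You should say explicitly that $\End(\ov A)$ is an order in a (quaternion) division algebra, hence has no nonzero nilpotents, so $(\phi\mp p)^2=0$ forces $\phi=\pm p$ in $\End(\ov A)$. Once $\phi$ is literally multiplication by an integer, every finite subgroup of $\ov A(\ov{\BF}_q)$ is $\phi$-stable and hence $\BF_q$-rational.

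Your sketch for part (2) does not work as written. The phrases ``$\phi^2=[q]=[p^2]$ up to a unit'' and ``$\phi$ acts on the formal group as $\pm[p]\circ(\text{unit})$'' are not correct and obscure the point. The clean argument is a one-liner once part (1) is in hand: since $\phi=\pm p$ in $\End(\ov A)$, the endomorphism $[\pm p]$ of $\ov A$ \emph{equals} the $q$-Frobenius, which on the formal group $\wh{\ov A}$ is $X\mapsto X^q$. Hence for any lift $A/\CO$ one has $[\pm p]_{\wh A}(X)\equiv X^q\pmod{\fm}$ and $[\pm p]_{\wh A}(X)\equiv (\pm p)X\pmod{X^2}$, which is precisely the Lubin--Tate condition for the uniformizer $\pm p$ of $\CO$. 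No ``classification'' or Honda--Tate input is needed. (Incidentally, this yields parameter $\pm p$ with the \emph{same} sign as $a_q$; the $\mp p$ in the displayed statement appears to be a sign slip, and indeed the paper's own applications --- $a_9(\ov A)=-6$ in Lemma~\ref{lem:super-pt}, and $\wh E\simeq\CF$ with $\pi=-p$ for $E$ with $a_p=0$ --- are consistent with parameter $\pm p$.)
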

\begin{lem}\label{lem:super-pt}
If $p\geq 3$, there is a supersingular point with $a_{p^2}=\pm 2p$ in $X_0(N)(\BF_{p^2})$ which is unramified under $\ov{\pi}:X_0(N)_{\BF_{p^2}}\ra \ov{E}$.
\end{lem}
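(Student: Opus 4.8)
The plan is to produce a supersingular elliptic curve $\ov{A}/\BF_{p^2}$ with $a_{p^2}(\ov{A}) = \pm 2p$ together with a cyclic $N$-isogeny, giving a point of $X_0(N)(\BF_{p^2})$, and then to arrange that this point is unramified for $\ov\pi$. First I would recall that over $\BF_{p^2}$ there are, up to isomorphism, only finitely many supersingular elliptic curves, and that the trace of Frobenius of a supersingular curve over $\BF_{p^2}$ lies in $\{0, \pm p, \pm 2p\}$; the value $\pm 2p$ occurs exactly for the curves whose Frobenius acts as the scalar $\mp p$, equivalently those isogenous to a supersingular curve already defined over $\BF_p$ after the quadratic base change. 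Since $E$ has supersingular reduction at $p$, the reduction $\ov E_{\BF_{p^2}}$ is itself such a curve (its $\BF_p$-Frobenius squared is $-p$), so $a_{p^2}(\ov E) = -2p$ (or $+2p$ after a quadratic twist); this gives at least one supersingular $j$-invariant with the desired Frobenius. To get a point of $X_0(N)(\BF_{p^2})$ I need a cyclic subgroup of order $N$; since $(N,p)=1$ and, by Lemma \ref{lem:LT}(1), every finite subgroup of a curve with $a_{p^2} = \pm 2p$ is automatically $\BF_{p^2}$-rational, any $N$-isogeny between two such curves is defined over $\BF_{p^2}$, so it suffices to produce such an isogeny source-and-target inside the (nonempty, by the mass formula / the fact that $\ov E$ reduces supersingularly) set of supersingular curves with $a_{p^2}=\pm 2p$.

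Next I would address the "unramified for $\ov\pi$" condition. The map $\ov\pi : X_0(N)_{\BF_{p^2}} \to \ov E$ is a finite morphism of smooth curves, hence ramified only at finitely many points. The supersingular locus of $X_0(N)$ in characteristic $p$ consists of finitely many points as well, but the key is that there are "many" supersingular points with $a_{p^2} = \pm 2p$ — more precisely, as the conductor $N$ grows the number of such points grows like the degree of $\ov\pi$ times the (fixed, positive) number of supersingular $j$-invariants with that Frobenius trace — while the ramification divisor of $\ov\pi$ has bounded degree $2g(X_0(N)) - 2 + 2\deg\ov\pi$ controlled by Riemann–Hurwitz. Comparing these two counts, a pigeonhole/counting argument shows at least one supersingular point with $a_{p^2} = \pm 2p$ avoids the ramification locus of $\ov\pi$. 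I would phrase this via the reduction-of-Heegner-points picture: the fibers of $\ov\pi$ over $\ov E(\BF_{p^2})$ are generically unramified, and the supersingular points with prescribed Frobenius are not concentrated in the finite ramification set.

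The main obstacle I anticipate is the case $p = 3$ itself, where the supersingular $j$-invariant is essentially unique ($j = 0 = 1728$ in characteristic $3$) and one cannot rely on having a large supply of supersingular $j$-invariants with $a_{p^2} = \pm 2p$ — the needed multiplicity has to come entirely from the level structure (the cyclic $N$-subgroups), and one must be careful that the quadratic twist needed to pin down the sign of $a_{p^2}$ is compatible with the rationality of the $N$-isogeny and with not landing in the ramification locus. This is exactly the point where the $p \geq 5$ argument of \cite{BKO} needs modification: for $p=3$ I would instead exploit that $\ov E$ is the reduction of a CM curve (as in the hypothesis that $E$ is CM when $p=3$), so the CM theory furnishes extra endomorphisms and hence extra $N$-isogenies landing on curves with $a_{p^2}=\pm 2p$, and I would check directly (using that the CM order is prime to $p$ and the $\BF_{p^2}$-rationality from Lemma \ref{lem:LT}(1)) that one such isogeny gives a point outside the finite ramification locus of $\ov\pi$. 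Once the point is found, parts (1) and (2) of Lemma \ref{lem:LT} guarantee it has the formal-group properties needed downstream.
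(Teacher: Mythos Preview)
Your overall strategy matches the paper's: count supersingular points with $\Gamma_0(N)$-structure lying above the desired Frobenius trace, bound the number of ramification points of $\ov\pi$ via Riemann--Hurwitz, and compare. Two points need correction.

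First, a minor slip: since $g(\ov E)=1$, Riemann--Hurwitz gives $\deg(\text{ramification divisor})=2g(X_0(N))-2$, not $2g(X_0(N))-2+2\deg\ov\pi$. This actually helps you.

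Second, and this is the genuine gap: your proposed handling of $p=3$ via ``CM of $E$ furnishes extra $N$-isogenies'' does not work and is not what is needed. The CM hypothesis on $E$ plays no role in this lemma; it is used later (Lemma~\ref{lem:key}) to control automorphisms of the pair $(\ov A,C(N))$. What the paper does for $p=3$ is a direct count over the \emph{single} supersingular $j$-invariant $j=0=1728$, represented by $\ov A:y^2=x^3-x$ with $a_9(\ov A)=-6$. The key observation you are missing is that $\#\Aut_{\ov{\BF}_3}(\ov A)=12$: since $-1\in\Aut(\ov A)$ fixes every cyclic subgroup, the $\mu=[\SL_2(\BZ):\Gamma_0(N)]$ cyclic order-$N$ subgroups of $\ov A$ fall into at least $\mu/6$ isomorphism classes of pairs $(\ov A,C(N))$, i.e.\ at least $\mu/6$ distinct points of $X_0(N)(\BF_9)$. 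On the other hand the genus formula yields
\[
2g(X_0(N))-2=\frac{\mu}{6}-\frac{\varepsilon_2}{2}-\frac{2\varepsilon_3}{3}-\varepsilon_\infty<\frac{\mu}{6},
\]
so the ramification locus cannot swallow all of them. The multiplicity you sought from ``extra isogenies'' comes simply from the level structure, and the precise constant $6=\#\Aut(\ov A)/2$ is what makes the inequality close up.
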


\begin{proof}
See \cite{BKO} for $p>3$. We give a proof for $p=3$.
Let $S_{ram}$ be the set of points of $X_0(N)$ which are ramified under $\ov{\pi}$.
By Hurwitz formula \cite[Chapter 7, Theorem 4.16]{Liu}
$$\#S_{ram}\leq 2g-2,$$
where $g$ is the genus of $X_0(N)$.
Let $\mu=N\prod_{p\mid N}(1+p^{-1})$ be the degree of natural projection $X_0(N)\ra X(1)$.
By genus formula
$$g=1+\frac{\mu}{12}-\frac{\varepsilon_2}{4}-\frac{\varepsilon_3}{3}-\frac{\varepsilon_\infty}{2},$$
where $\varepsilon_2$ (resp. $\varepsilon_3$) is the number of elliptic points of period $2$ (resp. $3$) in $X_0(N)$, and $\varepsilon_\infty$ the number of cusp of $X_0(N)$.
Hence
$$\#S_{ram}\leq\frac{\mu}{6}-\frac{\varepsilon_2}{2}-\frac{2\varepsilon_3}{3}-\varepsilon_\infty<\frac{\mu}{6}.$$

The elliptic curve
$$\ov{A}_{/\BF_3}:y^2=x^3-x$$
is supersingular and $j(\ov{A})=0=1728$.
Note that $\ov{A}(\BF_3)=\{O,(0,0),(1,0),(-1,0)\}\simeq \BZ/2\BZ\times\BZ/2\BZ$, $\ov{A}(\BF_9)\simeq \BZ/4\BZ\times\BZ/4\BZ$, and $a_3(\ov{A})=0, a_9(\ov{A})=-6$.
Since $p\nmid N$, the group $\ov{A}[N]$ is isomorphic to $\BZ/N\BZ\times\BZ/N\BZ$ and thus has $\mu$ cyclic subgroup of order $N$, which we denote by $\{C_1(N),\cdots,C_\mu(N)\}$ (they are defined over $\BF_9$).
Since $\#\Aut(\ov{A})=12$ (\cite[Theorem III.10.1]{Si}) and $-1$ induces an isomorphism of pairs $(\ov{A},C_i(N))\ra (\ov{A},C_i(N))$, there are at least $\frac{\mu}{6}$ isomorphism classes of pair $(\ov{A},C_i(N))$.
Hence there is a supersingular point with $a_{9}=-6$ in $X_0(N)(\BF_9)$ which is unramified under $\ov{\pi}:X_0(N)\ra \ov{E}$.

\end{proof}

\subsection{A formal CM point}
By Lemma \ref{lem:super-pt}, we can choose  a supersingular point $\ov{P}$ of $X_0(N)_{\BF_q}$ unramified under $\ov{\pi}$, representing an elliptic curve $\ov{A}$ with $a_{p^2} = \pm 2p$ and a $\Gamma_0(N)$-level structure. In particular, when $p=3$, $\ov{A}$ is chosen to be $y^2=x^3-x$.
We assume that $\ov{\pi}(\ov{P})=\ov{O}$  by replacing $\ov{\pi}$ with $\# E(\BF_{p^2})\ov{\pi}$.
In this subsection, for a properly chosen $E$, we construct a lift $P\in X_0(N)(\CO)$ of $\ov{P}$ representing an elliptic curve $A$ over $\CO$ and a $\Gamma_0(N)$-level structure, such that $\pi(P)\in \wh{E}(\fm)\backslash p\wh{E}(\fm)$, where $\fm$ be the maximal ideal of $\CO$.
For $p>3$, the construction details can be found in \cite{BKO}.
From now on, we assume $p=3$.
\begin{lem}\label{lem:key}
Let $\ov{A}:y^2=x^3-x$ be the supersingular elliptic curve over $\BF_3$.
There are infinitely many integers $N$ such that
\begin{enumerate}
  \item $N$ is the conductor of a CM elliptic curve $E_{/\BQ}$ which is good at $2$, $3$ and satisfies $a_3(E)=0$;
  \item for a $\Gamma_0(N)$-structure $C(N)$ of $\ov{A}$, the automorphism group of $(\ov{A},C(N))$ over $\ov{\BF}_3$ is $\{\pm 1\}$.
\end{enumerate}
\end{lem}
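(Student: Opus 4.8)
The plan is to exhibit an explicit infinite family of CM elliptic curves over $\BQ$ with $a_3(E)=0$ and good reduction at $2,3$, and then arrange the level $N$ so that the automorphisms of a $\Gamma_0(N)$-structure on $\ov{A}:y^2=x^3-x$ are killed down to $\{\pm 1\}$. First I would fix the imaginary quadratic field whose class number is $1$ that forces supersingularity at $3$: since $3$ is inert in $\BQ(\sqrt{-1})$ (because $-1$ is not a square mod $3$), any elliptic curve $E_{/\BQ}$ with CM by $\BZ[i]$ has supersingular reduction at $3$, hence $a_3(E)=0$ and good reduction there; good reduction at $2$ is a condition on the conductor that I can impose by choosing the twist appropriately (e.g.\ the curves $y^2=x^3-n^2x$ for $n$ odd and squarefree, or more conveniently $y^2 = x^3 + ax$ with $a$ chosen to have good reduction at $2$). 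These curves have conductor of the form (power of $2$)$\,\cdot\,($odd part$)$, and by varying the quartic twist parameter through a suitable infinite set one gets infinitely many distinct conductors $N$ with $2\nmid N$, $3\nmid N$. This establishes (1), with $N$ ranging over an infinite set.

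Next, for (2): the automorphism group of $(\ov{A},C(N))$ is the subgroup of $\Aut(\ov{A})$ fixing the cyclic subgroup $C(N)$ of order $N$. Since $\ov{A}$ has $j=0=1728$ over $\BF_3$, we have $\#\Aut(\ov{A})=12$ by \cite[Theorem III.10.1]{Si}; concretely $\Aut(\ov{A})$ is cyclic of order $12$, generated by an automorphism $\zeta$ acting on the tangent space (equivalently on the formal group, equivalently on $T_\ell\ov A$ for $\ell\neq 3$) by a primitive $12$th root of unity. An element of $\Aut(\ov A)$ fixes a cyclic subgroup $C$ of order $N$ exactly when its action on $C\cong\BZ/N\BZ$ is by a scalar; for $N$ coprime to $6$ one checks that $\zeta^k$ can fix $C$ only if $\zeta^k$ acts as a scalar on $\ov A[N]$, i.e.\ $\zeta^k\in\{\pm1\}$. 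The cleanest way to force this for \emph{some} choice of $C(N)$ — and in fact that is all we need, since $\ov P$ is allowed to be any supersingular point meeting the constraints — is to choose $N$ (within the infinite family from (1)) divisible by a prime $q\equiv 1\pmod{3}$, $q\nmid 6N_0$, and take $C(N)$ so that the $q$-part of $C(N)$ is an eigenline for $\zeta$ on $\ov A[q]$ with eigenvalue a primitive element generating $(\BZ/q)^\times$; then no $\zeta^k$ with $\zeta^k\notin\{\pm1\}$ can preserve it. Imposing "$N$ divisible by such a $q$'' removes only finitely many members and leaves infinitely many valid $N$.

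I would then tie the two parts together: choose the quartic twist so that both the good-reduction-at-$2$ condition and the "$N$ divisible by an auxiliary split prime $q\equiv1\pmod 3$'' condition hold, and observe that these conditions cut out an infinite set of conductors. The main obstacle I anticipate is not the arithmetic of $a_3=0$ (which is immediate from inertness of $3$ in $\BZ[i]$) but the \emph{compatibility} bookkeeping: one must ensure simultaneously that (a) $E$ really has conductor exactly $N$ — so one may need to pass through \cite{BKO}'s device of replacing $X_0(N)$ by $X(\Gamma_0(N)\cap\Gamma_1(M))$, as flagged at the end of the introduction, to have a fine moduli problem — and (b) the level $N$ of the modular parametrization of $E$ is exactly the level at which $\ov A$ carries the eigenline structure killing the automorphisms. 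In practice the argument is: produce the infinite family of CM curves with the desired reduction, then \emph{within} that family select those whose conductor is divisible by a suitable auxiliary prime, and for such $N$ pick the $\Gamma_0(N)$-structure on $\ov A$ with the prescribed $q$-eigenline; the count $\#S_{ram}<\mu/6$ from Lemma \ref{lem:super-pt} guarantees such a point is moreover unramified for $\ov\pi$, so the eventual formal CM point can be built on it.
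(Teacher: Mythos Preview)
Your proposal has genuine gaps in both parts. For part~(1), curves with CM by $\BZ[i]$ cannot have good reduction at $2$: since $2$ ramifies in $\BQ(i)$, every elliptic curve over $\BQ$ with complex multiplication by an order in $\BQ(i)$ has additive reduction at $2$, so no member of the family $y^2=x^3+ax$ (or $y^2=x^3-n^2x$) satisfies the hypothesis. The paper instead fixes a single CM curve $E_{/\BQ}$ with good reduction at $2$ and $3$ and $a_3(E)=0$ --- which forces the CM field to have $2$ and $3$ unramified and $3$ inert --- and then produces infinitely many conductors by taking quadratic twists $E^{(p)}$ by primes $p\equiv 5\pmod{12}$.

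For part~(2), your argument rests on incorrect premises. The group $\Aut(\ov A)$ over $\ov{\BF}_3$ is \emph{not} cyclic: it is the non-abelian group $\BZ/3\rtimes\BZ/4$ of order $12$ (the order-$3$ automorphisms are the translations $(x,y)\mapsto(x+r,y)$ with $r\in\BF_3$), so there is no single generator $\zeta$ acting by a primitive $12$th root of unity. Moreover, choosing the $q$-part of $C(N)$ to be an \emph{eigenline} for an automorphism guarantees that the automorphism \emph{does} preserve that line --- the opposite of what you want. Finally, the lemma asserts (2) for \emph{every} $\Gamma_0(N)$-structure $C(N)$, not merely for some well-chosen one; this is exactly what is used afterward, since the level structure $C(N)$ on $\ov A$ is the one carried by the unramified point coming from Lemma~\ref{lem:super-pt}, which you do not get to pick after the fact. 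The paper's mechanism is quite different: it arranges that some prime divisor of $N$ is $\equiv 2\pmod 3$ (so $-3$ is a non-square in $\BZ/N\BZ$) and that $\varphi(N)$ exceeds $\#\bigl(\ker(g+1)\cup\ker(g-1)\bigr)$ for each $g\in\Aut(\ov A)\setminus\{\pm1\}$, and then checks directly that no such $g$ can stabilize any cyclic subgroup of order $N$.
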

\begin{proof}
Let $X$ be the set of integers satisfying conditions a) and b) in the lemma, and $Y$  the set of integers $N$ satisfying the following conditions:
\begin{enumerate}
  \item[a)] $N$ is conductor of a CM elliptic curve $E_{/\BQ}$ which is good at $2,3$ and satisfies $a_3=0$;
  \item[b)] $\varphi(N)>\#(\ker(g+1)\cup\ker(g-1))$;
  \item[c)] $-3$ is not a square in $\BZ/N\BZ$.
\end{enumerate}
We claim that $Y$ is an infinite set and $Y\subset X$. It completes the proof.

We first show that $Y$ is an infinite set.
Choose a CM elliptic curve $E_{/\BQ}$ which has good reduction at $2,3$ and satisfies $a_3=0$ (for example, $E_{/\BQ}:y^2+y=x^3-38x+90$).
Let $N_E$ be its conductor.
By Dirichlet's theorem on arithmetic progressions, there are infinitely many primes $\ell\equiv 5\pmod{12}$ prime to $N_E$ and satisfying $\varphi(\ell^2N)>\#(\ker(g+1)\cup\ker(g-1))$.
Let $E^\ell$ be the quadratic twists of $E$ by prime $\ell$.
It is a CM elliptic curve with conductor $\ell^2N_E$ and satisfies $a_3=0$.
Hence $\ell^2N_E\in Y$, which implies that $Y$ is a infinite set.

Now we show that $Y\subset X$.  Let $N$ be an integer satisfying b) and c). Since $\# \Aut(\ov{A})=12$,  it suffices to prove that for any $g\in\Aut(\ov{A})\backslash\{\pm 1\}$ of order $2$ or $3$, the actions of $g$ on $N$-cyclic subgroup of $\ov{A}$ are not stable.
If not, i.e., there is $g\in \Aut(\ov{A})\backslash\{\pm 1\}$ of order $2$ or $3$ and an $N$-cyclic subgroup $C(N)$, such that for any primitive elements $\alpha\in C(N)$,  $g\alpha=n\alpha$ for some $n\in\BZ/N\BZ$.
It follows that $n^2\alpha=g^2\alpha=\alpha$ or $n^3\alpha=g^3\alpha=\alpha$  (depends on the order of $g$).
Thus $0\equiv n^2-1\equiv (n-1)(n+1)\pmod{N}$ or $0\equiv n^3-1\equiv (n-1)(n^2+n+1)\pmod{N}$.
Since $-3$ is not a square in $\BZ/N\BZ$, we have $n\equiv1$ or $-1$.
So all primitive elements $\alpha\in C(N)$ must belong to $\ker(g-1)\cup\ker(g+1)$, which contradicts condition b).

\end{proof}

Choose a point $\xi$ of order $4$ in $\ov{A}(\BF_{p^2})\simeq \BZ/4\BZ\times\BZ/4\BZ$. Let $X(\Gamma_0(N),\Gamma_1(4))$ be the modular curve  with $\Gamma_0(N)$ and $\Gamma_1(4)$-level structure.
Then $X(\Gamma_0(N),\Gamma_1(4))$ is a fine moduli space. Consider
$$\xymatrix{
X(\Gamma_0(N),\Gamma_1(4))(\BF_{p^2})\ar[d]_{\ov{\pi}'} & \ov{P}'=(\ov{A},C(N),\xi)\ar@{|->}[d] \\
X_0(N)(\BF_{p^2})\ar[d]_{\ov{\pi}} & \ov{P}=(\ov{A},C(N)) \ar@{|->}[d]^{\text{unramified}}\\
E(\BF_{p^2}) & \ov{O}
}.$$
We choose $E$ as in Lemma \ref{lem:key}.
Then the automorphism group of $(\ov{A},C(N))$ is $\{\pm 1\}$.
Hence $\#{\ov{\pi}'}^{-1}(\ov{P})=\deg \ov{\pi}'=[\GL_2(\BZ):\Gamma_1(4)]/2$, and therefore $\ov{\pi}'$ is unramified at $\ov{P}'$.
The formal completion of $\pi\circ\pi':X(\Gamma_0(N),\Gamma_1(4))\ra E$ (on integral models) at $\ov{P}'$ is an isomorphism (\cite[Chapter 4, Proposition 3.26]{Liu}).

Take a point
$$Q\in \wh{E}(\fm)\backslash p\wh{E}(\fm).$$
Then there is a point $P'\in X(\Gamma_0(N),\Gamma_1(4))(\CO)$ over $\ov{P}'$ sent to $Q$ by $\pi\circ\pi'$.
As $X(\Gamma_0(N),\Gamma_1(4))$ is a fine moduli space, there is an elliptic curve $A$ defined over $\CO$ that represents $P'$ by the moduli interpretation. The formal group $\hat{A}$ is Lubin–Tate
by Lemma \ref{lem:LT}. In particular, A is a formal CM elliptic curve.
Let $P$ be the image of $P'$ in $X_0(N)$.

%

\subsection{Construction of local points} Since $\wh{A}$ is Lubin-Tate, the module $T=T_pA=\CO t$ is a free $\CO$-module of rank $1$.
For $s\geq 0$, let $T_s=p^{-s}\BZ_p t+T$, $C_s=T_s/T$.
Let $A_s=A/C_s$, a quasi-canonical lift of conductor $p^s$ of $\ov{A}$ with respect to $A$.

Let $\Psi_s'$ be the fixed field of subgroup of $\Gal(\ov{\Phi}/\Phi)$ stabilizing $T_s$ and $\Psi_\infty '=\cup \Psi_s '$.
It's known that
$$\Gal(\Psi_s'/\Phi)=(\CO/p^s\CO)^\times/(\BZ/p^s\BZ)^\times,\quad \Gal(\Psi_\infty'/\Phi)\simeq \BZ_p\times \BZ/(p+1)\BZ.$$
Let $\Delta'$ be the torsion subgroup of $\Gal(\Psi_\infty'/\Phi)$.
The field $\Psi_\infty'$ contains the anticyclotomic $\BZ_p$-extension $\Psi_\infty$.
The field $\Psi_s$ lies in $\Psi_{s+1}'$.

Then $A_s/\CO_{\Psi_s'}$ and the canonical level structure induced from that of $A$ define a point $z_s\in X_0(N)(\CO_{\Psi_s'})$.
Let $x_s=\pi(z_s)$.
Let
$$y_s=\sum_{\sigma\in \Delta'}\sigma x_{s+1}\in \wh{E}(\fm_{\Psi_{s}}), \text{ and }y=(p+1)Q\in \wh{E}(\fm).$$

$$\xymatrix{
 & \Psi_\infty'\ar@{-}[ld]_{\Delta'}\ar@{-}[d]\\
\Psi_\infty \ar@{-}[d] & \Psi_{s+1}'\ar@{-}[d]\ar@{-}[ld]\\
\Psi_s \ar@{-}[d] & \Psi_1'\ar@{-}[ld]\\
\Phi=\Psi_0 &
}$$

\begin{thm}\label{thm:localpts}
There is a system of local points $y_s\in \CF(\Psi_s)$ and $y\in \CF(\Phi)\backslash p\CF(\Phi)$, such that
$$\Tr_{s+1/s}y_{s+1}=a_p y_s-y_{s-1},\quad s\geq 1$$
and
$$\Tr_{1/0}y_1=a_p y_0-y,\quad \text{ and }y_0=a_px_0,$$
where $a_p=a_p(E)(=0)$.
Moreover, $y_s\in A^+$ if $s$ is even, and $y_s\in A^-$ if $s$ is odd.
\end{thm}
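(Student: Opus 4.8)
The plan is to deduce the trace relations from the theory of quasi-canonical lifts and the modular parametrization, and then to extract the $A^\pm$-membership from a root-number/sign analysis of the characters $\lambda_\chi$ applied to the norm-compatible system. First I would recall the basic norm relation for quasi-canonical lifts: if $z_s \in X_0(N)(\CO_{\Psi_s'})$ is the point attached to $A_s$ together with its induced $\Gamma_0(N)$-structure, then the Hecke operator $T_p$ (equivalently, summing over the $p+1$ subgroups of order $p$ of $A_s$) relates $z_{s+1}$, the Galois conjugates of $z_s$, and $z_{s-1}$; this is the standard "Kolyvagin-type" distribution relation for CM points on the modular curve, and under the modular parametrization $\pi\colon X_0(N)\to E$ it translates into the Eichler--Shimura relation for the $a_p$ of $E$. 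Applying $\pi$ (already rescaled so that $\ov\pi(\ov P)=\ov O$, and after the further rescaling by $p+1$ built into the definition of $y$) and taking the partial trace $\Tr_{s+1/s}$, the $p+1$ terms assemble into $a_p y_s - y_{s-1}$; one treats the bottom two layers separately, which gives the stated $\Tr_{1/0}y_1 = a_p y_0 - y$ and $y_0 = a_p x_0$, using that $\Delta'$ acts through $\Gal(\Psi_1'/\Phi)$ at the bottom. Since $a_p = a_p(E) = 0$ in our situation, these relations read $\Tr_{s+1/s}y_{s+1} = -y_{s-1}$, $\Tr_{1/0}y_1 = -y$, and $y_0 = 0$, matching the three conditions (1)--(3) announced in the Strategy section. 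That $y \in \CF(\Phi)\setminus p\CF(\Phi)$ is immediate from $Q \in \wh E(\fm)\setminus p\wh E(\fm)$ together with the identification $\wh E \simeq \CF$ over $\CO$ and the fact that multiplication by $p+1$ is invertible on $\CF(\fm)/p$ since $p\nmid p+1$.

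Next I would establish the $A^\pm$-membership. By definition $y_s \in A^-$ (resp. $A^+$) means $\lambda_\chi(y_s) = 0$ for every anticyclotomic $\chi$ whose conductor is an even (resp. odd) power of $p$. The key inputs are Lemma \ref{lem:lambda}: part (2) says $\lambda_\chi(y_s)=0$ whenever $\chi$ has conductor $>p^{s+1}$, so only finitely many $\chi$ are in play for each fixed $s$; part (4) gives the norm-distribution behaviour of $\lambda_\chi$. Since the points $y_s$ are defined over $\Psi_s$ and come, via $\pi$, from the norm-compatible-up-to-$a_p$ system of CM points, the evaluation $\lambda_\chi(y_s)$ is (a $p$-adic period times) an algebraic part of the Hecke $L$-value $L(\ov\varphi\chi, 1)$ attached to $\chi$ twisted by the Hecke character $\varphi$ of the CM elliptic curve $E$; this is the local-point analogue of Theorem \ref{thm:Lvalue}(1). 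The sign analysis is then exactly as in Theorem \ref{thm:Lvalue}(2) / the proof of Theorem \ref{thm:rankgeq1}: the root number of $\ov\varphi\chi$ is $(-1)^{n+1}W(\varphi)$ when $\chi$ has conductor $p^{n+1}$, so the $L$-value — and hence $\lambda_\chi(y_s)$ — forcibly vanishes precisely when the parity of the conductor of $\chi$ is opposite to the sign determined by $W(\varphi)$ and the parity of $s$. Tracking the bookkeeping (the extra shift in parity coming from the index $s+1$ in the definition $y_s = \sum_{\sigma\in\Delta'}\sigma x_{s+1}$, and the fact that $E$ is chosen canonical so $W(\varphi)=+1$) yields $y_s \in A^+$ for $s$ even and $y_s \in A^-$ for $s$ odd.

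The main obstacle I expect is not the sign bookkeeping but the first step: verifying cleanly that the points $z_s$ built from the quasi-canonical lifts $A_s = A/C_s$, with their induced $\Gamma_0(N)$-structures, genuinely satisfy the Hecke/norm relation with the correct Galois action, i.e. that $\Tr_{\Psi_{s+1}'/\Psi_s'} z_{s+1}$ equals the image of $z_s$ under $T_p$ minus (the conjugates of) $z_{s-1}$ in $\Div X_0(N)$. This requires matching Gross's construction of compatible formal CM points with the CM-point distribution relations, and being careful that $A$ is only a \emph{formal} CM curve (fake CM), not genuinely CM, so one argues on the level of the formal group $\wh A \simeq \CF$ and the induced maps rather than on global elliptic curves; the unramifiedness of $\ov\pi$ at $\ov P$ and the fineness obtained by adding the $\Gamma_1(4)$-structure (Lemma \ref{lem:key}) are what make the formal completion $\wh{X_0(N)} \simeq \wh E$ an isomorphism, which in turn is what lets us transport the relation to $\CF(\Psi_s)$. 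A secondary technical point is confirming the independence of $\lambda_\chi(y_s)$ from the choices in a way that makes the $L$-value interpretation unambiguous; but this is routine given Wiles' reciprocity law and the machinery already set up in Section \ref{sec:Lval}.
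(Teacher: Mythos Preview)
Your argument for the trace relations is essentially the paper's: the Hecke operator $T_p$ applied to $x_s$, interpreted via the $p+1$ lattices containing $T_s$ with index $p$, yields $T_p x_s = \sum_{\sigma \in \Gal(\Psi_{s+1}'/\Psi_s')} \sigma x_{s+1} + x_{s-1}$, and since $\pi$ intertwines $T_p$ with multiplication by $a_p$ on $E$ the displayed relations follow.

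The second half of your plan, however, contains a genuine gap. You assert that $\lambda_\chi(y_s)$ is essentially an algebraic part of $L(\ov{\varphi}\chi,1)$, so that its vanishing follows from the root-number computation of Theorem~\ref{thm:Lvalue}(2). That reasoning would be valid for genuine Heegner points arising from CM by $\CO_K$, but the $y_s$ are not Heegner points: the elliptic curve $A$ over $\CO$ is only \emph{formally} CM (its formal group is Lubin--Tate), and the quasi-canonical lifts $A_s$ together with the resulting $z_s\in X_0(N)(\CO_{\Psi_s'})$ are purely local objects with no global moduli interpretation as CM points. The entire purpose of the Burungale--Kobayashi--Ota construction is precisely to replace Heegner points --- whose local $p$-indivisibility one cannot control --- by formal CM points; the price is that the $L$-value interpretation is lost. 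There is no Hecke character attached to $A$, no global $L$-function whose special value equals $\lambda_\chi(y_s)$, and no root number $W(\varphi)$ to invoke.

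The paper's proof of $y_s\in A^{\pm}$ is instead completely elementary, using only the trace relations you have already established together with Lemma~\ref{lem:lambda}. For $\chi$ of conductor $p^{k+1}$ with $k\geq 1$: if $s<k$ then $\lambda_\chi(y_s)=0$ by Lemma~\ref{lem:lambda}(2); if $s\geq k$ and $s-k$ is odd, iterating $\Tr_{s+1/s}y_{s+1}=-y_{s-1}$ gives $\RN_{s/k}y_s=-(-p)^{(s-k-1)/2}y_{k-1}$, whence $p^{s-k}\lambda_\chi(y_s)=\lambda_\chi(\RN_{s/k}y_s)=0$ by Lemma~\ref{lem:lambda}(4) and (2). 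The parity condition ``$s-k$ odd'' is exactly what pins $y_s$ to $A^-$ for $s$ odd and to $A^+$ for $s$ even, the trivial character being handled via $y_0=a_p x_0=0$. All ingredients for this are already in your first paragraph; you should carry out this direct computation rather than detouring through a nonexistent $L$-value formula.
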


\begin{proof}
Identify $\CF$ with $\wh{E}$.
Consider the action of the Hecke operator $T_p$ on $x_s$.
There are two types of lattice containing $T_s$ with index $p$:
$$\frac{1+ap^s}{p^{s+1}}\BZ_p t+T\text{ for }a\in \{0,1,\dots,p-1\},\text{ or }\frac{1}{p^{s}}\BZ_p t+\frac{1}{p}T.$$
The first type is of form $
\sigma x_{s+1}$ and permuted by the action of $\Gal(\Psi_{s+1}'/\Psi_{s}')$ and the second type is equivalent to the lattice $\frac{1}{p^{s-1}}\BZ_p t+T$.
Hence for $s\geq 1$, we have
$$T_px_s=\sum_{\sigma}\sigma x_{s+1}+x_{s-1}.$$
Since $T_p$ acts as $a_p(E)$ on $E$, we have the desired relation.

For the proof of $y_s\in A^\pm$, consider the anticyclotomic character $\chi$ of conductor $p^{k+1}$ for $k\geq 1$.
If $s<k$, then $\lambda_\chi(y_s)=0$.
If $s\geq k$, then $\lambda_\chi(\RN_{s/k}y_s)=p^{s-k}\lambda_\chi(y_s)$.
But if $2\nmid s-k$, we have $$\lambda_\chi(\RN_{s/k}y_s)=\lambda_\chi\left(-(-p)^{(s-k-1)/2}y_{k-1}\right)=0,$$
i.e. $\lambda_\chi(y_s)=0$, hence $y_s\in A^-$ if $s$ is odd.
Similarly, if $\chi$ is trivial and $s$ is even,
$$p^s\lambda_\chi(y_s)=\lambda_\chi(\RN_{n/0}y_s)=\lambda_\chi\left((-p)^{n/2}y_0\right)=0.$$
Hence $y_s\in A^+$ if $s$ is even.
\end{proof}

\subsection{Proof of Theorem \ref{thm:V-}}
Write $G_n^-=\Gal(\Psi_n/\Phi)$.
For any $\CO[G_n^-]$-module $Z$, denote the Herbrand quotient of $Z$ by $h_n(Z)$, i.e.,
$$h_n(Z):=|\wh{H}^0(G_n^-,Z)|/|H^1(G_n^-,A^-)|.$$
We know that $h_n(Z_1/Z_2)=h_n(Z_1)/h_n(Z_2)$ and $h_n(Z)=1$ if $Z$ is finite.
Let $A_n^-=A^-\cap \CF(\Psi_n)$.
The exact sequence
$$0\ra A^-\ra A^-\otimes\Phi\ra A^-\otimes\Phi/\CO\ra 0$$
gives the $\CO[G^-]$-mod isomorphism
$$H^1\left(G^-,A^-\right)\simeq\left(A^-\otimes\Phi/\CO\right)^{G^-}/\left((A^-)^{G^-}\otimes\Phi/\CO\right)\simeq \left(A^-\otimes\Phi/\CO\right)^{G^-}/\left(\CF(\Phi)\otimes\Phi/\CO\right).$$
Note that for odd $n$, we have $h_n(A_n^-)=p^{n-1}$ (\cite[Lemma 7.1]{Ru}), hence
$$|H^1(G_n^-,A_n^-)|=|\wh{H}^0(G_n^-,A_n^-)|/h_n(A_n^-)=p^{-(n-1)}|(A_n^-)^{G_n^-}/\Tr_n A_n^-|\leq [\CF(\Phi):\CO y]=1.$$
Therefore, $H^1(G^-,A^-)=\varinjlim H^1(G_n^-,A_n^-)=0$, i.e. $(A^-\otimes\Phi/\CO)^{G^-}=\CF(\Phi)\otimes\Phi/\CO$.

\subsection{Rubin's conjecture}
\begin{thm}
Assuming $p\geq 3$, we have
$$V_\infty^*\simeq V_\infty^{*,+}\oplus V_\infty^{*,-}.$$
\end{thm}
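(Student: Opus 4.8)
The plan is to deduce the theorem formally from the structural facts proved above. Set $\bar V:=V_\infty^*/V_\infty^{*,-}$ and consider the $\Lambda$-linear composite
$$f\colon V_\infty^{*,+}\hookrightarrow V_\infty^*\twoheadrightarrow\bar V.$$
Its kernel is $V_\infty^{*,+}\cap V_\infty^{*,-}$, which vanishes by Corollary \ref{cor1}, so $f$ is injective; and if $f$ is moreover surjective, then $V_\infty^{*,+}+V_\infty^{*,-}=V_\infty^*$, which together with the trivial intersection exhibits $V_\infty^*$ as the internal direct sum $V_\infty^{*,+}\oplus V_\infty^{*,-}$. Thus everything reduces to proving that $f$ is surjective (and this will in addition recover $\rank_\Lambda V_\infty^{*,\pm}=1$).

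For the surjectivity I would argue by Nakayama's lemma. By Corollary \ref{cor3}, $\bar V$ is a free $\Lambda$-module of rank one; since $\Lambda=\CO[[G^-]]$ is local with maximal ideal $\fm_\Lambda=(\pi,\gamma-1)$ (for $\gamma$ a topological generator of $G^-$), the quotient $\bar V/\fm_\Lambda\bar V$ is one-dimensional over the residue field $\CO/\pi\CO\cong\BF_{p^2}$, so it is enough to produce a single element of $V_\infty^{*,+}$ with nonzero image in $\bar V/\fm_\Lambda\bar V$. Here I would use that the Coates--Wiles homomorphism $\delta$ is the trivial-character component $\delta_{\mathbf 1}$: the trivial anticyclotomic character has conductor $1=p^0\in\Xi^+$, so $\delta$ vanishes on $V_\infty^{*,-}$ and descends to a map $\bar\delta\colon\bar V\to\CO$; and since $\delta(x^\sigma)=\delta(x)$ for $\sigma\in G^-$, this $\bar\delta$ is $\Lambda$-linear for the augmentation action of $\Lambda$ on $\CO$, whence $\bar\delta((\gamma-1)\bar V)=0$ and $\bar\delta(\pi\bar V)\subseteq\pi\CO$, so $\bar\delta$ factors through a map $\bar V/\fm_\Lambda\bar V\to\CO/\pi\CO$. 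Now take the element $\xi\in V_\infty^{*,+}$ of Theorem \ref{thm:V+}, which satisfies $\delta(\xi)\in\CO^\times$. Then $\bar\delta(f(\xi))=\delta(\xi)$ is a unit, so $f(\xi)$ has nonzero image in $\CO/\pi\CO$, hence $f(\xi)\notin\fm_\Lambda\bar V$; since $\bar V/\fm_\Lambda\bar V$ is one-dimensional, $f(\xi)$ generates it, and by Nakayama's lemma $f(\xi)$ generates $\bar V$ over $\Lambda$. Therefore $f$ is surjective, hence an isomorphism, and $V_\infty^*\simeq V_\infty^{*,+}\oplus V_\infty^{*,-}$.

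The final step itself is thus light bookkeeping; the substance sits entirely in the inputs. The hardest of these is Corollary \ref{cor3} (equivalently Theorem \ref{thm:V-}), whose proof rests on the construction of a compatible system of formal CM local points $y_s\in\CF(\Psi_s)$ with locally $p$-indivisible bottom point $y\in\CF(\Phi)\setminus p\CF(\Phi)$ (Theorem \ref{thm:localpts}, building on the special supersingular points of Lemmas \ref{lem:super-pt} and \ref{lem:key}), together with the Herbrand-quotient identity $h_n(A_n^-)=p^{n-1}$ forcing $|H^1(G_n^-,A_n^-)|\le 1$; the other nontrivial input, Theorem \ref{thm:V+}, relies on Finis's mod-$p$ nonvanishing of the relevant Hecke $L$-values. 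Accordingly, the only two points that require genuine care in the present deduction are: (i) the identification $\delta=\delta_{\mathbf 1}$, so that $\delta$ descends to $\bar V$ and actually detects a $\Lambda$-module generator; and (ii) the elementary observation that $f$ surjective combined with $V_\infty^{*,+}\cap V_\infty^{*,-}=0$ yields an \emph{internal} direct sum. One could equally run the argument through the perfect Kummer pairing of Proposition \ref{prop:Kum}, identifying $\bar V$ with the Pontryagin dual of $A^-\otimes\Phi/\CO$ and $\bar\delta$ with pairing against the local point $y$, but the Nakayama formulation above seems the most economical.
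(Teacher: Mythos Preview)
Your deduction is correct and is essentially the same as the paper's: the paper's proof is literally the single sentence ``Corollary \ref{cor1}, Theorem \ref{thm:V+} and Corollary \ref{cor3} complete the proof,'' and you have simply spelled out the (standard) Nakayama argument the paper leaves implicit. The one point worth a short remark is your identification $\delta=\delta_{\mathbf 1}$: in the paper's normalization $\delta$ is defined separately from the $\delta_\chi$, so strictly speaking you should justify that $\delta$ vanishes on $V_\infty^{*,-}$ (equivalently, that $\delta$ agrees with $\delta_{\mathbf 1}$ up to a unit); this is a standard fact from Rubin's original set-up and causes no difficulty.
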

\begin{proof}
The Corollary \ref{cor1}, Theorem \ref{thm:V+} and Corollary \ref{cor3} complete the proof.
\end{proof}

\section{Some applications}
Recall that $K$ is an imaginary quadratic field where $p$ does not divide $h_K$ and is inert in $K$.
Let $K_\infty$ be the anticyclotomic $\BZ_p$-extension of $K$.
We identify $G^-$ with $\Gal(K_\infty/K)$.
Let $\CR$ be the ring of integers of a finite extension of $\Phi$ containing the image of $\wh{\varphi}$.
Let $T=\CR(\wh{\varphi})$ and $W=T\otimes_{\CO}\Phi/\CO$.
The completion of $K_n$ at the prime above $p$ is identical to $\Psi_n$.
Note that $W\simeq \CF[\pi^\infty]\otimes\CR$ as a $\CR[G_{\Phi}]$-module.
The exact sequence
$$0\ra \CF[\pi^{n+1}]\ra \CF(\ov{\Phi})\xra{\pi^{n+1}} \CF(\ov{\Phi})\ra 0 $$
gives the Kummer map $\CF(\Psi_n)/\pi^{n+1}\ra H^1(\Psi_n,\CF[\pi^{n+1}])$.
Hence we have
$$\CF(\Psi_n)\otimes\CR\otimes\BQ_p/\BZ_p\ra H^1(\Psi_n,\CF[\pi^\infty])\otimes\CR\simeq H^1(\Psi_n,W).$$
Let $H^1_{\pm}(\Psi_n,W)\subset H^1(\Psi_n,W)$ be the Kummer image of $\CF^{\pm}(\Psi_n)\otimes\CR\otimes\BQ_p/\BZ_p$ where
$$\CF^{\pm}(\Psi_n):=\{y\in\CF(\Psi_n)|\lambda_\chi(y)=0\text{ for all $\chi\in \Xi^{\pm}$ factor through }\Gal(\Psi_n/\Psi)\}.$$
Let $H^1_{\pm}(\Psi_n,T)\subset H^1(\Psi_n,T)$ be the orthogonal complement of $H^1_{\pm}(\Psi_n,W)$ with respect to the local Tate pairing.

We define
$$\Sel_{\pm}(K_n,W)=\ker\left\{H^1(K_n,W)\ra \frac{H^1(\Psi_n,W)}{H^1_{\pm}(\Psi_n,W)}\times\prod_{v\nmid p}H^1(K_{n,v},W)\right\}.$$
Let $\CX_{*}$ be the Pontryagin dual of $\varinjlim_n \Sel_{*}(K_n,W)$ for $*\in \{+,-\}$.
In \cite[Theorem 3.6]{AH} it is shown that $\CX_{\epsilon}$ is a finitely generated torsion $\Lambda$-module.




Let $E$ and $\varphi$ be as defined in section \ref{sec:Lval}.
As in Theorem \ref{thm:Lvalue}, there is a unit $\xi=\xi(E,\Omega)\in U_\infty^*$ such that
$$\delta(\xi)=\frac{L(\varphi,1)}{\Omega}$$
and
$$\delta_\chi(\xi)=\frac{L(\ov{\varphi}\chi,1)}{\Omega}$$
for a finite character $\chi$ of $\Gal(\Phi_\infty/\Phi_0)$.
Let $\epsilon\in \{+,-\}$ be the sign of $\varphi$.
It is known that the projection of $\xi$ on $V_\infty^*$ belongs to $V_\infty^{*,\epsilon}$.
Define $\CC_\infty$ as the free $\Lambda$-submodule of $V_\infty^{*,\epsilon}$ generated by $\xi$.
Take a generator $v_\epsilon$ of the $\Lambda$-module $V_\infty^{*,\epsilon}$ and write
$$\xi=\CL_p(\varphi,\Omega,v_\epsilon)\cdot v_\epsilon$$
for a power series $\CL_p(\varphi,\Omega,v_\epsilon)\in\Lambda$.
We call it Rubin's $p$-adic $L$-function associated with $\varphi$. We sometimes omit the indices of $\CL_p(\varphi,\Omega,v_\epsilon)$ and write its evaluation at an anticyclotomic character $\chi$ by $\CL_p(\chi)$ for simplicity.
Rubin's $p$-adic $L$-function has the following interpolation property:
$$\CL_p(\chi)=\frac{1}{\delta_\chi(v_\epsilon)}\frac{L(\ov{\varphi\chi},1)}{\Omega}$$
In analogy with \cite{BKO}, we have the following theorems.
\begin{thm}
Let $\epsilon =W(\varphi)$ be the sign of $\varphi$, then
$$\chaR(\CX_{-\epsilon})=(\CL_p).$$
\end{thm}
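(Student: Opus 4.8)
The plan is to deduce this from Agboola--Howard's anticyclotomic Iwasawa main conjecture \cite{AH}, following the route that \cite{BKO} takes for $p\geq 5$: \cite{AH} establishes $\chaR(\CX_{-\epsilon})=(\CL_p)$ under the sole ``global'' hypothesis that $V_\infty^*\simeq V_\infty^{*,+}\oplus V_\infty^{*,-}$, which is now available for $p=3$ by Theorem \ref{thm:main}. First I would check that the remaining ingredients are in place for $p=3$. We have an imaginary quadratic field $K$ with $p\nmid h_K$ and $p$ inert, so automatically $K\neq\BQ(\sqrt{-3})$; by Lemma \ref{lem:key} we have a CM elliptic curve $E_{/\BQ}$ of a suitable conductor, good at $2$ and $3$, with $a_3(E)=0$, and the attached Hecke character $\varphi$. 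By Theorem \ref{thm:main}, $V_\infty^{*,\epsilon}$ is a free $\Lambda$-module of rank one, so $\CL_p=\CL_p(\varphi,\Omega,v_\epsilon)\in\Lambda$ is well-defined; it is nonzero since by Rohrlich \cite{Ro1} its values $\CL_p(\chi)=\delta_\chi(v_\epsilon)^{-1}L(\ov{\varphi\chi},1)/\Omega$ are nonzero for infinitely many $\chi$, and hence $\CX_{-\epsilon}$ is $\Lambda$-torsion and $\chaR(\CX_{-\epsilon})$ is meaningful.

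It then remains to prove the two divisibilities. For $(\CL_p)\subseteq\chaR(\CX_{-\epsilon})$ I would use the Euler system of elliptic units as set up in \cite{AH}: via Wiles's explicit reciprocity law \cite{Wil} and the Kummer map, transport the elliptic units $\xi$ into the anticyclotomic local cohomology, use the decomposition of Theorem \ref{thm:main} to keep only their $\epsilon$-component, and run the Kolyvagin-derivative argument to bound $\varinjlim_n\Sel_{-\epsilon}(K_n,W)$ from above; this step needs only that $p$ is odd and $p\nmid h_K$. For the reverse divisibility $\chaR(\CX_{-\epsilon})\subseteq(\CL_p)$ I would follow \cite{AH}: restrict the two-variable Iwasawa main conjecture over $K$ (available at $p=3$ since $p\nmid h_K$ and $K\neq\BQ(\sqrt{-3})$) to the anticyclotomic line and use Theorem \ref{thm:main} together with the interpolation formula to identify the characteristic ideal of the $-\epsilon$-part with $(\CL_p)$ up to a unit. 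Combining the two divisibilities yields $\chaR(\CX_{-\epsilon})=(\CL_p)$.

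The main obstacle, as in \cite{BKO}, is bookkeeping rather than a new idea: one must go through \cite{AH}, \cite{Ru}, \cite{Fi} and \cite{Ro1} and confirm that every hypothesis of the form $p\geq 5$ is either a surrogate for Rubin's conjecture, now discharged by Theorem \ref{thm:main}, or a surrogate for a nonvanishing input --- Rohrlich's generic nonvanishing of $L(\varphi\chi,1)$ and Finis's nonvanishing mod $p$ --- which in fact holds with no lower bound on $p$, only $p\nmid h_K$ and the parity and conductor conditions already in force. One should also double-check that the construction of $\xi=\xi(E,\Omega)$ and its interpolation property, obtained from Theorem \ref{thm:Lvalue} through the auxiliary prime $\ell$ and a $\BQ$-curve over $H$, specialized at $\nu=1$, and descended to the strong Weil curve over $\BQ$, carries over verbatim to $p=3$. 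Granting these verifications, the theorem follows exactly as in \cite{BKO}.
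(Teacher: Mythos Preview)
Your proposal is correct and matches the paper's approach: the paper gives no explicit proof but simply says ``Similar to \cite{BKO}, we have the following theorems,'' meaning the Agboola--Howard main conjecture argument carries over once Rubin's conjecture is in hand via Theorem~\ref{thm:main}. Your write-up spells out in more detail what that deferral entails, but the underlying route is the same.
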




\begin{thm}
Let $\chi$ be an anticyclotomic character of conductor $p^n$.
Then we have
$$\rank E(K_n)^\chi\leq  \begin{cases}
    \ord_{\chi}(\CL_p),  &\chi\in\Xi^\epsilon\\
    \ord_{\chi}(\CL_p)+1,  &\chi\in\Xi^{-\epsilon}
\end{cases}$$
\end{thm}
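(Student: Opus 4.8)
The plan is to deduce the rank bound from the Iwasawa main conjecture (the preceding theorem $\chaR(\CX_{-\epsilon})=(\CL_p)$) together with a control theorem relating the $\chi$-component of the Selmer group over $K_n$ to the Mordell--Weil group $E(K_n)^\chi$. First I would fix an anticyclotomic character $\chi$ of conductor $p^n$ and recall that, since $E$ has complex multiplication by $\CO_K$ and $\varphi$ is the associated Hecke character, the $\chi$-isotypic piece of $E(K_n)\otimes\overline{\BQ}_p$ is governed by the $\chi$-eigenspace of the (Bloch--Kato or classical) Selmer group $\Sel(K,T\otimes\chi)$, whose Pontryagin dual is, up to pseudo-isomorphism, a quotient of $\CX_{-\epsilon}$ or $\CX_\epsilon$ according to the sign of $\chi\varphi$. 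Concretely, the finiteness of $\Sha$ being irrelevant for an upper bound, I would use the inequality $\rank E(K_n)^\chi \le \corank_{\CR} \Sel(K,W\otimes\chi^{-1})$, valid unconditionally.

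The core of the argument is then local-at-$p$ bookkeeping. The two Selmer conditions $H^1_{\pm}(\Psi_n,W)$ differ from the Bloch--Kata local condition, and the discrepancy is exactly accounted for by the Coates--Wiles/$\lambda_\chi$ functionals: for $\chi\in\Xi^{\epsilon}$ the relevant local condition at $p$ agrees with the one cutting out $\CX_{-\epsilon}$, so specializing the characteristic ideal identity $\chaR(\CX_{-\epsilon})=(\CL_p)$ at $\chi$ gives $\corank \Sel_{-\epsilon}(K,W)^\chi = \ord_\chi(\CL_p)$, whence the bound $\rank E(K_n)^\chi\le\ord_\chi(\CL_p)$. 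For $\chi\in\Xi^{-\epsilon}$ one is specializing at a character for which $\CL_p$ does \emph{not} directly interpolate $L(\overline{\varphi\chi},1)$ in the same chamber; here the "wrong" local condition at $p$ is larger than the Bloch--Kato one by a module of $\CR$-corank exactly $1$ (this is precisely the $V_\infty^{*,+}\oplus V_\infty^{*,-}$ decomposition from Theorem~\ref{thm:main} together with Proposition~\ref{prop:Kum}: passing from one sign condition to the other changes the local term at $p$ by one dimension). Feeding this rank-one defect into the Greenberg-style global Euler characteristic / Poitou--Tate comparison of $\Sel_{\epsilon}$ and $\Sel_{-\epsilon}$ yields the extra $+1$.

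So the key steps, in order, are: (i) reduce $\rank E(K_n)^\chi$ to the $\CR$-corank of a Selmer group attached to the CM Galois representation $W\otimes\chi^{-1}$; (ii) identify, using the Kummer pairing description in Section~3 and the explicit reciprocity law, the local condition at the unique prime above $p$ with $H^1_{\pm}(\Psi_n,W)$ for the appropriate sign, the sign flipping between $\Xi^\epsilon$ and $\Xi^{-\epsilon}$; (iii) invoke the main conjecture $\chaR(\CX_{-\epsilon})=(\CL_p)$ and specialize at $\chi$, controlling the error between $\varinjlim_n\Sel_{*}(K_n,W)^\chi$ and $\Sel_{*}(K,W\otimes\chi^{-1})$ by a standard control theorem (the relevant $\Gamma$-cohomology being finite because $\CX_*$ is torsion); (iv) in the $\Xi^{-\epsilon}$ case, account for the one extra dimension coming from the mismatch between the $H^1_{-\epsilon}$ local condition and the Bloch--Kato condition, using Tate local duality and the perfectness of the Kummer pairing from Proposition~\ref{prop:Kum}.

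The main obstacle I anticipate is step (iv): making the "$+1$" completely rigorous requires a careful comparison of the two Selmer structures $\Sel_\epsilon$ and $\Sel_{-\epsilon}$ at the bad character, i.e. a clean statement that for $\chi\in\Xi^{-\epsilon}$ the local cohomology class of the elliptic unit Euler system (equivalently, of Rubin's $p$-adic $L$-function's defining element $\xi$) lies in a codimension-one subspace, so that the global Selmer group can grow by at most one beyond $\ord_\chi(\CL_p)$. This is where one genuinely uses that $V_\infty^{*}=V_\infty^{*,+}\oplus V_\infty^{*,-}$ with each summand free of rank one over $\Lambda$, rather than just the inequalities on ranks; everything else is a formal consequence of the main conjecture plus standard control and duality, essentially as in \cite{BKO}.
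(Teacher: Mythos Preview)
The paper does not actually give a proof of this theorem: it simply writes ``Similar to \cite{BKO}, we have the following theorems'' and states the result. So there is no in-paper argument to compare against; your outline should be measured against the Burungale--Kobayashi--Ota approach the paper invokes.

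Your plan is the standard one and matches the BKO strategy: bound $\rank E(K_n)^\chi$ by the corank of a signed Selmer group, use the main conjecture $\chaR(\CX_{-\epsilon})=(\CL_p)$ to bound that corank by $\ord_\chi(\CL_p)$, and pick up the extra $+1$ in the $\Xi^{-\epsilon}$ case from the rank-one discrepancy between the $\pm$ local conditions at $p$, which is exactly the content of the decomposition $V_\infty^*=V_\infty^{*,+}\oplus V_\infty^{*,-}$ with each summand free of rank one. Two small corrections: in step~(iii) the specialization of a characteristic ideal only yields $\corank \Sel_{-\epsilon}(K,W)^{\chi}\le \ord_\chi(\CL_p)$, not equality (pseudo-null modules can be present), but this is all you need for the stated inequality; and in step~(ii) the Bloch--Kato local condition at a supersingular prime is trivial (there is no ordinary filtration), so the comparison is really between the trivial/full local conditions and the $\pm$ conditions, not between Bloch--Kato and $\pm$. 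With those adjustments your sketch is correct and is essentially the argument the paper is deferring to.
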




\section*{Declarations}
\subsection*{Ethical Approval}
Not applicable.
\subsection*{Funding}
Not applicable.

\bibliographystyle{abbrv}
\bibliography{Rubin_conjecture}

\begin{thebibliography}{10}

\bibitem{AH}
A.~Agboola and B.~Howard.
\newblock {Anticyclotomic Iwasawa theory of CM elliptic curves II}.
\newblock {\em Mathematical Research Letters}, 12(5):611--622, 2005.

\bibitem{BS}
B.~J. Birch and N.~M. Stephens.
\newblock Computation of {Heegner} points.
\newblock Modular forms, {Symp}. {Durham}/{Engl}. 1983, 13-41 (1984)., 1984.

\bibitem{BKO}
A.~Burungale, S.~Kobayashi, and K.~Ota.
\newblock {Rubin's conjecture on local units in the anticyclotomic tower at
  inert primes}.
\newblock {\em Annals of Mathematics}, 194(3):943--966, 2021.

\bibitem{burungale2024hecke}
A.~A. Burungale, W.~He, S.~Kobayashi, and K.~Ota.
\newblock Hecke $l$-values, definite shimura sets and mod $\ell$ non-vanishing.
\newblock {\em arXiv preprint arXiv:2408.13932}, 2024.

\bibitem{burungale2024p}
A.~A. Burungale, S.~Kobayashi, and K.~Ota.
\newblock p-adic l-functions and rational points on cm elliptic curves at inert
  primes.
\newblock {\em Journal of the Institute of Mathematics of Jussieu},
  23(3):1417--1460, 2024.

\bibitem{BURUNGALE2024}
A.~A. Burungale, S.~Kobayashi, K.~Ota, and S.~Yasuda.
\newblock Kato's epsilon conjecture for anticyclotomic cm deformations at inert
  primes.
\newblock {\em Journal of Number Theory}, 2024.

\bibitem{Fi}
T.~Finis.
\newblock {Divisibility of anticyclotomic L-functions and theta functions with
  complex multiplication}.
\newblock {\em Annals of mathematics}, pages 767--807, 2006.

\bibitem{Gr}
R.~Greenberg.
\newblock {On the Birch and Swinnerton-Dyer conjecture}.
\newblock {\em Inventiones mathematicae}, 72(2):241--265, 1983.

\bibitem{Haz}
M.~Hazewinkel.
\newblock {On norm maps for one dimensional formal groups III}.
\newblock 1977.

\bibitem{Liu}
Q.~Liu.
\newblock {Algebraic geometry and arithmetic curves}, 2006.

\bibitem{Ro}
D.~E. Rohrlich.
\newblock {Root numbers of Hecke L-functions of CM fields}.
\newblock {\em American Journal of Mathematics}, 104(3):517--543, 1982.

\bibitem{Ro1}
D.~E. Rohrlich.
\newblock {On L-functions of elliptic curves and anticyclotomic towers}.
\newblock {\em Inventiones mathematicae}, 75(3):383--408, 1984.

\bibitem{Ru}
K.~Rubin.
\newblock {Local units, elliptic units, Heegner points and elliptic curves}.
\newblock {\em Inventiones mathematicae}, 88(2):405--422, 1987.

\bibitem{Si}
J.~H. Silverman.
\newblock {\em {The arithmetic of elliptic curves}}, volume 106.
\newblock Springer, 2009.

\bibitem{Tat}
J.~Tate.
\newblock {WC-groups over $p$-adic fields}.
\newblock {\em S{\'e}minaire Bourbaki}, 4:265--277, 1957.

\bibitem{Wil}
A.~Wiles.
\newblock {Higher explicit reciprocity laws}.
\newblock {\em Annals of Mathematics}, 107(2):235--254, 1978.

\bibitem{Wi}
J.-P. Wintenberger.
\newblock {Structure galoisienne de limites projectives d'unit{\'e}s locales}.
\newblock {\em Compositio Mathematica}, 42(1):89--103, 1980.

\bibitem{Ya}
R.~I. Yager.
\newblock {On two variable $p$-adic L-functions}.
\newblock {\em Annals of Mathematics}, 115(2):411--449, 1982.

\end{thebibliography}
\end{document}